\documentclass[leqno,english,11pt]{amsart}
\usepackage[active]{srcltx}
\usepackage{color}

\usepackage{epsfig}
\usepackage{amssymb}
\usepackage{latexsym}
\usepackage[latin1]{inputenc}
\usepackage{amsmath,amsthm,amssymb,babel}
\newtheorem{theorem}{Theorem}

\newtheorem{proposition}{Proposition}
\newtheorem{cor}{Corollary}

\newtheorem{remark}{Remark}

\def\endproof{\hfill $\Box$\par\vskip3mm}

\def\neweq#1{\begin{equation}\label{#1}}
\def\endeq{\end{equation}}

\def\phi{\varphi}

\def\PP{{\mathbb P} }

\def\di{\displaystyle}

\begin{document}

\title[Comparison results for the p-torsional rigidity]{\bf Comparison results for the $p$-torsional rigidity on convex domains}

\author[ C. Enache, M. Mih\u ailescu, and D. Stancu-Dumitru ]{Cristian Enache, Mihai Mih\u ailescu$\,^{\star}$,
and Denisa Stancu-Dumitru }

\address{Cristian Enache
\hfill\break\indent Department of Mathematics and Statistics
\hfill\break\indent American University of Sharjah 
\hfill\break\indent P.O. Box 26666, Sharjah, United Arab Emirates.
\hfill\break\indent {\tt cenache@aus.edu}}

\address{Mihai Mih\u ailescu
\hfill\break\indent Department of Mathematics
\hfill\break\indent University of Craiova
\hfill\break\indent 200585 Craiova, Romania
\hfill\break\indent and
\hfill\break\indent ''Gheorghe Mihoc - Caius Iacob" Institute of Mathematical Statistics and Applied Mathematics of the Romanian Academy
\hfill\break\indent 050711 Bucharest, Romania.
\hfill\break\indent {\tt mmihailes@yahoo.com}}

\address{Denisa Stancu-Dumitru
\hfill\break\indent Department of Mathematics and Computer Sciences
\hfill\break\indent National University of Science and Technology  Politehnica of Bucharest
\hfill\break\indent 060042 Bucharest, Romania.
\hfill\break\indent {\tt denisa.stancu@yahoo.com}}

\thanks{ $\,^{\star}$Corresponding author}


\keywords{ inradius; open, bounded, convex domain; $p$-torsional rigidity; $p$-Laplacian.\\
\indent 2020 {\it Mathematics Subject Classification.}  49S05; 49J40; 47J20; 35Q74; 47J05.}


\date{June 14, 2026}
\maketitle

\begin{abstract}
{\footnotesize  For each open, bounded and convex domain $\Omega \subset \mathbb{R}^{D},$ $D\geq 2$, and each real number $p>1,$ we denote by $u_{p}$ the $p$\emph{-torsion function} on $\Omega $, i.e. the solution of the \emph{torsional creep problem} $\Delta_{p}u=-1$ in $\Omega $, $u=0$ on $\partial \Omega $, where $\Delta _{p}u:=\operatorname{div}( \left\vert \nabla u\right\vert ^{p-2}\nabla u) $ is the $p$-Laplacian. Let $T_p(\Omega)$ be the $p$\emph{-torsional rigidity} on $\Omega $, defined as $T_{p}\left( \Omega \right) :=\int_{\Omega }u_{p}dx$. Define $T\left( p;\Omega \right) :=\left\vert \Omega \right\vert ^{p-1}T_{p}\left( \Omega \right) ^{1-p}$, where $|\Omega|$ stands for the Lebesgue measure of $\Omega$. The main purpose of this paper is to compare the values of $T(p;\Omega)$ for bounded convex domains having different inradii. We prove that for any $0<a<b$ there exists an explicit constant $\gamma_{D,p}\in[1/D,1)$, depending only on the dimension $D$ and the parameter $p$, such that $T(p;\Omega_b)\leq T(p;\Omega_a)$, for all $ \Omega_a\in\PP^D(a)$, and $\Omega_b\in\PP^D(b)$, if and only if $\gamma_{D,p}b\geq a$, where $\PP^D(r)$ denotes the family of convex bounded domains in $\mathbb{R}^D$ of inradius $r$. In addition, we discuss the asymptotic equality case, the limiting regimes $p\rightarrow 1^+$ and $p\rightarrow\infty$, and the behaviour of the scale-invariant functional on model families such as rectangles, orthotopes, ellipses, and triangles. We also derive a fixed-volume monotonicity consequence for the unnormalised torsional rigidity, parameterized by the inradius.}
\end{abstract}

\section{Introduction and statement of main results}\label{sectia1}

\subsection{Definitions and notation}
For each integer $D\geq 1$ we denote by $\mathbb{R}^D$ the $D$-dimensional Euclidean space. 
For a bounded convex set $\Omega\subset\mathbb{R}^D$, denote by $R_\Omega$ its \emph{inradius}, i.e. the radius of the largest ball inscribed in $\Omega$, and by $\partial\Omega$ its boundary. 

Given $r>0$, we write $B_r^D$ for the open ball of radius $r$ in $\mathbb{R}^D$, and define the following two classes of sets
\[
\PP^D:=\{\Omega\subset\mathbb{R}^D \mid \Omega \text{ is open, bounded and convex}\},
\]
and
\[
\PP^D(r):=\{\Omega\in\PP^D \mid R_\Omega=r\}.
\]

For a given  $\Omega\in \PP^D$ and $p\in(1,\infty)$, we consider the $p$\emph{-torsion problem}
\begin{equation}\label{ptorsion}
\begin{cases}
-\Delta_{p}u=1 & \text{ in }\Omega, \\
u=0 & \text{ on }\partial \Omega,
\end{cases}
\end{equation}
where $\Delta_{p}u:=\mathrm{div}(|\nabla u|^{p-2}\nabla u)$ stands for the $p$-{\it Laplacian} operator. A simple application of the {\it Direct Method of the Calculus of Variations} shows that for each $p\in(1,\infty)$ problem (\ref{ptorsion}) possesses a unique solution, $u_{p}\in W_{0}^{1,p}(\Omega )$, called the $p$\emph{-torsion function} on $\Omega$. The associated $p$\emph{-torsional rigidity} on $\Omega$ is defined as follows
$$T_p(\Omega):=\int_\Omega u_p\;dx\,.$$
The following variational characterization will, however, be more useful for our subsequent analysis: 
$$T_p(\Omega)^{p-1}=\sup_{u\in W_0^{1,p}(\Omega)\setminus\{0\}}\frac{\left(\displaystyle\int_\Omega|u|\;dx\right)^p}{\displaystyle\int_\Omega|\nabla u|^p\;dx}\,,$$
(see, e.g.,  F. Della Pietra, N. Gavitone, \& S. Guarino Lo Bianco \cite[relations (18) and (19)]{DPGGLB}). Next, for convenience we introduce the \emph{normalized $p$-torsional rigidity}
\begin{equation}\label{defTnorm}
T(p;\Omega):=|\Omega|^{p-1} T_p(\Omega)^{1-p}, 
\end{equation}
where $|\Omega|$ is the Lebesgue measure of $\Omega$, which can also be written as
\begin{equation}\label{Tmin}
T(p;\Omega):=\inf_{u\in W_0^{1,p}(\Omega)\setminus\{0\}}\frac{\di\frac{1}{|\Omega|}\di\int_\Omega|\nabla u|^p\;dx}{\left(\di\frac{1}{|\Omega|}\di\int_\Omega|u|\;dx\right)^p}\,.
\end{equation}
Note that, for general domains $\Omega\in \PP^D$, neither $u_p$ nor $T(p;\Omega)$ can be computed explicitly.  

{In the engineering literature, torsional rigidity is a classical measure of the resistance of a prismatic bar to twisting. When $D=2$ and $p=2$, $\Omega$ represents the cross-section of the bar and the problem \eqref{ptorsion} reduces to the classical linear torsion problem, in which the torsional rigidity depends strongly on the geometry of the cross-section. The nonlinear case $p\neq2$ is also relevant in mechanics: it is related to torsional creep and to power-law type material response, in particular for bars subject to torsion over long time intervals at high temperature (see B. Kawohl \cite{K}; see also L.M. Kachanov \cite{KachanovCreep,KachanovPlasticity} for the underlying theory of creep and plasticity). In this setting, the parameter $p$ encodes the nonlinear constitutive behaviour of the material. Thus, comparison principles for $T_p(\Omega)$ and $T(p;\Omega)$ under geometric constraints such as prescribed inradius or fixed volume provide quantitative information on how the shape of the cross-section influences the torsional response, and are therefore relevant to shape optimization and engineering design.}

\subsection{Formulation of the problem}\label{subsect1.2}
For the ball $B_R^D$, the $p$-torsion function is explicitly known (see B. Kawohl \cite{K}):
$$
u_{p}(x)=\dfrac{D(p-1)}{p}\left[ \left( \dfrac{R}{D}\right) ^{\frac{p}{p-1}%
}-\left\vert \dfrac{x}{D}\right\vert ^{\frac{p}{p-1}}\right] ,\;\;\;\forall
\;x\in B_{R}^D\,.
$$
Further computations (see, e.g., \cite[relation (2.7)]{EM}) yield
$$T\left(p;B_R^D\right)=D\left(D+\frac{p}{p-1}\right)^{p-1}\frac{1}{R^p},\;\;\;\forall\;p>1\,.$$
Hence, for two different radii $0<R_1<R_2$, we have
$$T\left(p;B_{R_2}^D\right)<T\left(p;B_{R_1}^D\right),\;\;\;\forall\;p>1\,.$$
This observation naturally leads to the following question: \emph{given $\Omega_a\in\PP^D(a)$ and $\Omega_b\in\PP^D(b)$, with $0<a<b$ fixed, is it true that we have}
\begin{equation}\label{ineg}
T(p;\Omega_b)<T(p;\Omega_a),\qquad \forall p>1?
\end{equation}

\subsection{Motivation}
For comparison, recall that the {\it first Dirichlet eigenvalue of the $p$-Laplacian} on $\Omega\in \PP^D$ is defined by
\begin{equation}\label{dirichletmin}
\lambda_1(p;\Omega):=\min_{u\in W_0^{1,p}(\Omega)\setminus\{0\}}\frac{\di\int_\Omega|\nabla u|^p\;dx}{\di\int_\Omega|u|^p\;dx}\,.
\end{equation}
In \cite[Theorem 1]{MSD}, the last two authors established the following sharp comparison principle.

\begin{theorem}\label{mt1} 
Let $D\geq 1$ be a fixed integer. Given real numbers $p>1$ and $0<a<b$, we define the constant
\begin{equation}\label{constanta}
	C(D;p):=\left[\frac{\lambda_1\left(p;B^D_{1}\right)}{\lambda_1\left(p;B^1_{1}\right)}\right]^{1/p}>0\,.
\end{equation}
 We then have
\begin{equation}\label{complambda}
	\lambda_1(p;\Omega_b)< \lambda_1(p;\Omega_a),\;\;\;\forall\;\Omega_a\in\PP^D(a),\; \text{and}\;\Omega_b\in\PP^D(b)\,,
\end{equation}
if and only if $C(D;p)^{-1}b\geq a$.

Moreover, when $C(D;p)^{-1}b=a$ then there exists a sequence of domains $\{\Omega_n\}_n\subset\PP^D(a)$ for which the equality in (\ref{complambda}) holds  asymptotically if $\Omega_b=B^D_b$, in the sense that
  \begin{equation}\label{asycomplambda}
	\lim_{n\rightarrow\infty}\lambda_1(p;\Omega_n)=\lambda_1\left(p;B^D_b\right)\,.
\end{equation}
\end{theorem}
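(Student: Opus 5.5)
The proof rests on two sharp two-sided bounds for $\lambda_1(p;\Omega)$ on $\Omega\in\PP^D(r)$, which I then compare at $r=a$ and $r=b$.

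\emph{Upper bound.} Since $B^D_r\subset\Omega$, domain monotonicity of $\lambda_1$ (extend test functions by zero) and the scaling $\lambda_1(p;B^D_r)=r^{-p}\lambda_1(p;B^D_1)$ give
\[
\lambda_1(p;\Omega)\le \frac{\lambda_1(p;B^D_1)}{r^p}.
\]
Equality holds for the ball.

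\emph{Lower bound.} For convex $\Omega$ with inradius $r$, I would use the Hersch--Protter type inequality
\[
\lambda_1(p;\Omega)>\frac{\lambda_1(p;B^1_1)}{r^p}=\left(\frac{\pi_p}{2r}\right)^p,
\]
where $\pi_p$ is the $p$-trigonometric constant. The standard route is to work with the distance function $d(x)=\mathrm{dist}(x,\partial\Omega)$, which is concave on convex $\Omega$, so $-\Delta d\ge0$ in the distributional sense, and $|\nabla d|=1$ a.e. Let $v$ be the first Dirichlet eigenfunction on $(0,2r)$, symmetric about $r$, so that $\phi(t)=v(t)$ solves the one-dimensional $p$-eigen-ODE on $[0,r]$ with $\phi'(r)=0$ and $\phi'>0$ on $[0,r)$. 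Then $w=\phi(d)$ satisfies $-\Delta_p w\ge \lambda_1(p;B^1_1)r^{-p}w^{p-1}$ weakly, because the extra term $-(\phi')^{p-1}\Delta d$ is nonnegative. By the Picone identity / Allegretto--Huang comparison, a positive supersolution forces $\lambda_1(p;\Omega)\ge \lambda_1(p;B^1_1)r^{-p}$. Strictness follows because equality would force $\Delta d\equiv0$ on the support of $\phi'(d)$, which is impossible for a bounded domain. I expect this to be the main technical step: justifying the weak supersolution inequality across the ridge set of $d$, where $d$ is not differentiable. The remedy is that $\phi'(r)=0$ kills the singular part, and concavity gives the right sign of the singular measure. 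Alternatively, I would simply quote this inequality from the literature.

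\emph{Sufficiency.} Suppose $b\ge C(D;p)a$. Then
\[
\lambda_1(p;\Omega_b)\le \lambda_1(p;B^D_1)\,b^{-p}\le \lambda_1(p;B^D_1)\,C(D;p)^{-p}a^{-p}=\lambda_1(p;B^1_1)\,a^{-p}<\lambda_1(p;\Omega_a),
\]
which is \eqref{complambda}.

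\emph{Necessity and asymptotic equality.} Take the slabs $\Omega_n=(-a,a)\times(-n,n)^{D-1}\in\PP^D(a)$. Separation of variables is unavailable for $p\neq2$, so instead I test with $v(x_1)\psi(x'/n)$, where $\psi$ is a cutoff, to get $\lambda_1(p;\Omega_n)\to\lambda_1(p;B^1_1)a^{-p}$; the reverse inequality comes from the lower bound above. If $b<C(D;p)a$, choose $\Omega_b=B^D_b$. Then for large $n$,
\[
\lambda_1(p;B^D_b)=\lambda_1(p;B^D_1)\,b^{-p}>\lambda_1(p;B^1_1)\,a^{-p}+\varepsilon>\lambda_1(p;\Omega_n),
\]
so \eqref{complambda} fails. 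When $b=C(D;p)a$, the same limit gives exactly \eqref{asycomplambda}.
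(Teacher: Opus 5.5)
Your proof is correct and follows the same scheme the paper relies on. The paper quotes this statement from \cite{MSD} without proof, but its proof of Theorem~\ref{maintheorem} runs identically: the inscribed-ball upper bound $\lambda_1(p;B^D_1)R_\Omega^{-p}$ attained by balls, the Hersch--Protter lower bound $\lambda_1(p;B^1_1)R_\Omega^{-p}$ approached by slabs, and a comparison of the two at inradii $a$ and $b$.

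One correction: the Hersch--Protter inequality is strict only when $D\geq 2$, since for $D=1$ intervals give equality, so your ``impossible for a bounded domain'' step genuinely needs $D\geq 2$. For $D=1$ one has $C(1;p)=1$, and the strictness in your sufficiency chain comes instead from $b>a$ in the middle inequality.
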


Since $\lambda_1(p;\Omega)$ and $T(p;\Omega)$ are defined through closely related
variational principles, it is natural to ask whether an analogue of Theorem \ref{mt1} holds for $T(p;\Omega)$. This expectation is further supported by the case $p=1$, where  both constants $\lambda_1(p;\Omega)$ and $T(p;\Omega)$ degenerate to the {\it Cheeger's constant} of the domain $\Omega$, denoted by $h(\Omega)$. In such a case, the following result is known, being also established in \cite[Corollary 1]{MSD}:

\begin{cor}\label{c1}
	Let $D\geq 1$ be a fixed integer  and $0<a<b$ be two given real numbers. Then 
\begin{equation}\label{comph}
	h(\Omega_b)\leq h(\Omega_a),\;\;\;\forall\;\Omega_a\in\PP^D(a),\;\;\Omega_b\in\PP^D(b)\,,
\end{equation}
if and only if $D^{-1}b\geq a$ (with strict inequality in (\ref{comph}) when $D^{-1}b>a$).

Moreover, when $D^{-1}b=a$ then there exists a sequence of domains $\{\Omega_n\}_n\subset\PP^D(a)$ for which the equality in (\ref{comph}) holds asymptotically if $\Omega_b=B^D_b$, in the sense that
	$$\lim_{n\rightarrow\infty}h(\Omega_n)=h\left(B^D_b\right)\,.$$
\end{cor}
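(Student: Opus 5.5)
The statement to prove is Corollary \ref{c1}. I would work directly with the geometric definition
\[
h(\Omega)=\inf_{E\subset\Omega}\frac{P(E)}{|E|}
\]
rather than passing through the limit $p\to1^+$. The argument rests on two elementary two-sided bounds valid for every $\Omega\in\PP^D(r)$:
\begin{equation*}
\frac1r\le h(\Omega)\le \frac Dr .
\end{equation*}

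\textbf{Upper bound.} Since $\Omega$ contains a ball of radius $r$, monotonicity of $h$ under inclusion gives $h(\Omega)\le h(B^D_r)$. For the ball, $h(B^D_r)=P(B^D_r)/|B^D_r|=D/r$, because the ball is its own Cheeger set.

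\textbf{Lower bound.} I would use the classical fact (Alter--Caselles, or Kawohl--Lachand-Robert in the plane) that a convex domain has a convex Cheeger set $E^\ast\subset\Omega$. Hence $h(\Omega)=P(E^\ast)/|E^\ast|$, and $R_{E^\ast}\le r$. For any bounded convex set $K$ with inradius $\rho$, the coarea formula applied to the distance to $\partial K$ gives
\[
|K|=\int_0^\rho P(K_t)\,dt\le \rho\, P(K),
\]
where $K_t=\{x\in K:\ \mathrm{dist}(x,\partial K)>t\}$. The inequality holds because the inner parallel sets $K_t$ are convex and contained in $K$, so $P(K_t)\le P(K)$. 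Applying this to $K=E^\ast$ yields $h(\Omega)\ge 1/R_{E^\ast}\ge 1/r$.

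\textbf{Sufficiency.} If $D^{-1}b\ge a$, then for all $\Omega_a\in\PP^D(a)$ and $\Omega_b\in\PP^D(b)$,
\[
h(\Omega_b)\le \frac Db\le\frac1a\le h(\Omega_a).
\]
When $D^{-1}b>a$ the middle inequality is strict, which gives the strict version of \eqref{comph}.

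\textbf{Necessity and the asymptotic equality case.} Here I construct thin slabs:
\[
\Omega_n:=(-n,n)^{D-1}\times(-a,a)\in\PP^D(a)\quad\text{for } n\ge a.
\]
Testing with $E=\Omega_n$ gives
\[
h(\Omega_n)\le \frac{P(\Omega_n)}{|\Omega_n|}=\frac{2(2n)^{D-1}+2D'(2n)^{D-2}\,2a}{(2n)^{D-1}2a}\longrightarrow\frac1a,
\]
where $D'=D-1$. Combined with the lower bound, this gives $h(\Omega_n)\to 1/a$.

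Now take $\Omega_b=B^D_b$, so $h(\Omega_b)=D/b$. If $a>b/D$, then $h(\Omega_n)<D/b$ for $n$ large, so \eqref{comph} fails; this proves the ``only if'' direction. If $a=b/D$, the same sequence satisfies $h(\Omega_n)\to 1/a=D/b=h(B^D_b)$, which is the claimed asymptotic equality.

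\textbf{Main obstacle.} The delicate point is the lower bound $h(\Omega)\ge 1/R_\Omega$. It needs either the convexity of the Cheeger set, or a direct argument that $P(E)/|E|\ge 1/r$ for arbitrary $E\subset\Omega$.

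If I want to avoid citing the convexity result, I would try a calibration. Take the vector field $V=-\nabla d_\Omega$, where $d_\Omega$ is the distance to $\partial\Omega$, note that $-\Delta d_\Omega\ge0$ because $d_\Omega$ is concave, and combine this with a weighted divergence identity. This direct route is less clean, so I would prefer to cite the convexity theorem for Cheeger sets of convex domains.
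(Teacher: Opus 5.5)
Your proof is correct. The paper itself does not prove this corollary: it is recalled from the earlier work of the last two authors on $\lambda_1$. The only indication of a route is in Section~\ref{sec:limits}, part (i). There the corridor $1\le R_\Omega h(\Omega)\le D$ is obtained by letting $p\to1^+$ in \eqref{corridorQ}, using the convergence $T(p;\Omega)\to h(\Omega)$, with slab-type boxes and balls as the extremal configurations. The comparison statement would then follow from Steps~2--4 of the proof of Theorem~\ref{maintheorem}, with $\gamma_{D,p}$ replaced by $D^{-1}$.

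Your argument has exactly this skeleton: the squeeze $1/r\le h\le D/r$ on $\PP^D(r)$ gives sufficiency and strictness, and slabs tested against $B^D_b$ give necessity and asymptotic equality. The difference is that you prove the corridor directly at $p=1$:
\begin{itemize}
\item the upper bound comes from inclusion monotonicity and $h(B^D_r)=D/r$, rather than from $h(\Omega)\le P(\Omega)/|\Omega|\le D/R_\Omega$;
\item the lower bound comes from convexity of the Cheeger set plus $|K|\le R_K P(K)$, which is the inequality $|\Omega|/P(\Omega)<R_\Omega$ quoted in Step~1.
\end{itemize}
The limit route inherits the corridor from the $p>1$ theory. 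It needs the convergence theorem as $p\to1^+$ and the Hersch--Protter inequality \eqref{HPineq}, and it still requires explicit slab and ball computations for the sharpness part, since sharpness of $1\le Q_p$ for each $p$ does not pass to the limit. Your route is self-contained at the level of the Cheeger problem; its one nontrivial input is the Alter--Caselles convexity theorem.

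You can drop even that input: your fallback calibration works once the field is weighted. Take $r=R_\Omega$, let $d_\Omega$ be the distance to $\partial\Omega$, and set $V:=(d_\Omega/r-1)\nabla d_\Omega$. Then $|V|\le1$. Since $d_\Omega$ is concave, $\Delta d_\Omega\le0$ as a measure; together with $0\le d_\Omega\le r$ and $|\nabla d_\Omega|=1$ a.e., this gives
\[
\operatorname{div}V=\frac{|\nabla d_\Omega|^2}{r}+\Big(\frac{d_\Omega}{r}-1\Big)\Delta d_\Omega\ \ge\ \frac1r
\]
in $\mathcal{D}'(\Omega)$. Mollifying $V$ and applying the divergence theorem gives $|E|/r\le P(E)$ for every finite-perimeter $E\Subset\Omega$. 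A homothety towards an interior point maps $\Omega$ compactly into itself by convexity, which removes the restriction $E\Subset\Omega$. Hence $P(E)/|E|\ge 1/R_\Omega$ for every $E\subset\Omega$, so $h(\Omega)\ge 1/R_\Omega$ without any information on Cheeger sets.
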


\subsection{Main results}
We now state the main theorem of this paper, which provides the analogue of Theorem \ref{mt1} for the \emph{normalized $p$-torsional rigidity}.

\begin{theorem}\label{maintheorem} Let $D\geq 2$ be a fixed integer. Given real numbers $p>1$ and $0<a<b$, we define the constant
\begin{equation}\label{constantanoua}
	\gamma_{D,p}:=\left[\frac{T\left(p;B^D_{1}\right)}{T\left(p;B^1_{1}\right)}\right]^{-1/p}>0\,.
\end{equation}
We then have
\begin{equation}\label{compT}
	T\left(p;\Omega_b\right)\leq T\left(p;\Omega_a\right),\;\;\;\forall\;\Omega_a\in\PP^D(a),\;{\rm and}\;\Omega_b\in\PP^D(b)\,,
\end{equation}
if and only if $\gamma_{D,p}b\geq a$, with strict inequality in (\ref{compT}) when $\gamma_{D,p}b>a$.
\bigskip

Moreover, when $\gamma_{D,p}b=a$ then the equality in (\ref{compT}) occurs asymptotically if $\Omega_b=B^D_b$, in the sense that there exists a sequence of domains $\{\omega_n\}_n\subset\PP^D(a)$  for which we have
\begin{equation}\label{asycompT}
	\lim_{n\rightarrow\infty}T\left(p;\omega_n\right)=T\left(p;B^D_b\right)\,.
\end{equation}
\end{theorem}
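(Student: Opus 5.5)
The proof will mirror the argument of Theorem \ref{mt1} in \cite{MSD}. It combines two sharp two-sided bounds for $T(p;\Omega)$ on $\PP^D(r)$, both extremal in the inradius:
\[
T(p;B^1_1)\,\frac{1}{r^p}\;\le\; T(p;\Omega)\;\le\; T(p;B^D_1)\,\frac{1}{r^p},\qquad \Omega\in\PP^D(r).
\]
Here $T(p;B^1_1)/r^p$ is the one-dimensional slab value, attained only asymptotically by thin slabs. The scaling $T(p;t\Omega)=t^{-p}T(p;\Omega)$ follows at once from \eqref{Tmin}, and the explicit formula for balls gives $T(p;B^D_1)=D(D+p')^{p-1}$ with $p'=p/(p-1)$, so in particular $T(p;B^1_1)=(1+p')^{p-1}$. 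With these bounds in hand, the implication ``$\gamma_{D,p}b\ge a\Rightarrow$ \eqref{compT}'' is immediate:
\[
T(p;\Omega_b)\le T(p;B^D_1)\,b^{-p}=T(p;B^1_1)(\gamma_{D,p}b)^{-p}\le T(p;B^1_1)\,a^{-p}\le T(p;\Omega_a).
\]
For strictness when $\gamma_{D,p}b>a$, the middle inequality is already strict. The fact that $\gamma_{D,p}\in[1/D,1)$ then follows by comparing with Corollary \ref{c1} as $p\to1^+$, and directly from the formula.

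\textbf{Upper bound.} I would use \eqref{Tmin} with a test function built from the distance to the boundary, $u=\phi(d_\Omega)$ with $\phi$ the radial profile of the ball torsion function, $\phi(s)=R^{p'}-(R-s)^{p'}$ after normalization. Since $|\nabla d_\Omega|=1$, the coarea formula reduces both integrals to one-dimensional integrals against $P(s):=\mathcal{H}^{D-1}(\{d_\Omega=s\})$. For convex $\Omega$, the Brunn--Minkowski theorem implies that $P(s)^{1/(D-1)}$ is concave on $[0,r]$, and a standard argument then gives that $P(s)/(r-s)^{D-1}$ is nondecreasing. This monotonicity is exactly the statement that, among all profiles allowed on $\PP^D(r)$, the ball is extremal. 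Optimizing the one-dimensional Rayleigh-type quotient, via a Chebyshev-type rearrangement inequality comparing $P$ with the ball weight $(r-s)^{D-1}$, should yield $T(p;\Omega)\le T(p;B^D_r)$. I expect this step to be the main obstacle. If the inequality between the ratio of integrals against $P$ and against $(r-s)^{D-1}$ does not close directly, my fallback is to invoke the known estimate $T_p(\Omega)\ge c\,|\Omega|\,r^{p'}$ for convex sets, which is sharp for the ball, following Della Pietra--Gavitone--Guarino Lo Bianco \cite{DPGGLB}.

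\textbf{Lower bound.} $\Omega$ lies in the slab of width $2r$ determined by two supporting hyperplanes; the standard fact here is that a convex body with inradius $r$ has minimal width at most $2\sqrt D\,r$, so a sharper argument is needed. Instead I would use the web-function or $d_\Omega$ estimate: the torsion function satisfies $u_p\le$ the one-dimensional slab profile evaluated at $d_\Omega$, by comparison with the supporting half-space at the nearest boundary point. Integrating this with $P(s)$ nonincreasing shows that the averaged quotient is at least the slab value $T(p;B^1_1)\,r^{-p}$.

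\textbf{Necessity and the asymptotic equality case.} If $\gamma_{D,p}b<a$, take $\Omega_b=B^D_b$ and $\Omega_a=\omega_n$, where $\omega_n=(-a,a)\times(-n,n)^{D-1}$ are thin slabs of inradius $a$. By separation of variables and a cutoff near the lateral faces, $T(p;\omega_n)\to T(p;B^1_a)=T(p;B^1_1)a^{-p}$. This limit is $<T(p;B^D_b)$ when $\gamma_{D,p}b<a$, contradicting \eqref{compT}, and it equals $T(p;B^D_b)$ when $\gamma_{D,p}b=a$, which gives \eqref{asycompT}.
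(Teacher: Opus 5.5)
\paragraph{Comparison with the paper.} Your overall architecture is the paper's. You use the two inradius bounds $T(p;B^1_1)r^{-p}\le T(p;\Omega)\le T(p;B^D_1)r^{-p}$ on $\PP^D(r)$, which are the paper's \eqref{HPineq} and \eqref{buser}. Sufficiency follows from the chain of inequalities. Necessity and \eqref{asycompT} come from slabs of inradius $a$ tested against $B^D_b$; the paper phrases this through a minimizing sequence for $\alpha(p;D)=1$, realized by the same slabs. There are two differences. Your strictness argument is more direct than the paper's rescaling detour: the middle inequality in your chain is already strict when $\gamma_{D,p}b>a$. And you try to prove the two bounds, whereas the paper simply imports them from Della Pietra--Gavitone--Guarino Lo Bianco and Prinari--Zagati.

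\paragraph{Upper bound.} This step closes without the fallback. Take $w(s)=(r-s)^{D-1}$. Monotonicity of $P/w$ and Chebyshev's inequality give $\int fP\,ds\big/\int P\,ds\ge\int fw\,ds\big/\int w\,ds$ for every nondecreasing $f$, with the reverse inequality for nonincreasing $f$. Here $\phi$ is increasing and $(\phi')^p=(p')^p(r-s)^{p'}$ is decreasing. Passing from the ball weight to $P$ therefore lowers the numerator and raises the denominator in \eqref{Tmin}. Hence the quotient of $\phi(d_\Omega)$ is at most the ball's, which equals $T(p;B^D_r)$ because $\phi(d_{B})$ is a multiple of the ball's torsion function.

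\paragraph{Lower bound.} This step is not justified as written. Your opening claim is false: $\Omega$ need not lie in a slab of width $2r$, since an equilateral triangle of inradius $r$ has minimal width $3r$.

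Comparison with a supporting half-space also cannot give $u_p\le\psi_r(d_\Omega)$, where $\psi_r(s)=\frac1{p'}\bigl(r^{p'}-(r-s)^{p'}\bigr)$. A half-space admits no nonnegative solution of $-\Delta_p v=1$ vanishing on its boundary. The natural substitute, the slab between two parallel supporting hyperplanes, only yields $u_p\le\psi_{w_\nu/2}(d_\Omega)$, with $w_\nu$ the width in direction $\nu$. This is too weak, because $\psi_\rho(s)$ increases with $\rho$ and $w_\nu$ can exceed $2r$.

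The estimate is nonetheless true, for a different reason. Since $\psi_r'(s)^{p-1}=r-s$ and $|\nabla d_\Omega|=1$ a.e.,
\[
-\Delta_p(\psi_r\circ d_\Omega)=1-(r-d_\Omega)\,\Delta d_\Omega\ \ge\ 1\qquad\text{weakly in }\Omega .
\]
This holds because $d_\Omega$ is concave on a convex set, so $\Delta d_\Omega\le 0$ as a measure, and $r-d_\Omega\ge0$. The weak comparison principle then yields $u_p\le\psi_r(d_\Omega)$.

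With this replacement, or by simply citing \eqref{HPineq} as the paper does, your Chebyshev step ($\psi_r$ increasing, $P$ nonincreasing) gives $T_p(\Omega)\le\frac{r^{p'}}{p'+1}|\Omega|$. That is exactly \eqref{HPineq}, and your proposal becomes a complete, self-contained proof of the theorem.
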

\bigskip

As an immediate consequence, we obtain an answer to the problem we addressed in Subsection \ref{subsect1.2}.

\begin{cor}\label{maincor}
	Let $D\geq 2$ be a fixed integer and $0<a<b$ be two given real numbers. Then the following assertions hold.
	\smallskip
	
	If $D^{-1}b<a$ then there exists a real number $p_D\in(1,\infty)$ and two sets $\Omega_a\in\PP^D(a)$ and $\Omega_b\in\PP^D(b)$ such that
	\begin{equation}\label{inegnu}
	T\left(p_D;\Omega_a\right)<T\left(p_D;\Omega_b\right)\,.
    \end{equation}

    Conversely, if $D^{-1}b\geq a$ then for each set $\Omega_a\in\PP^D(a)$ and $\Omega_b\in\PP^D(b)$ we have
    \begin{equation}\label{inegda}
	T\left(p;\Omega_b\right)< T\left(p;\Omega_a\right),\;\;\;\forall\;p\in(1,\infty)\,.
    \end{equation}
\end{cor}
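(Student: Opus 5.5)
The plan is to derive both assertions from Theorem \ref{maintheorem} by computing $\gamma_{D,p}$ explicitly and studying it as a function of $p$. Write $q:=p/(p-1)$. The formula for $T(p;B_R^D)$ from Subsection \ref{subsect1.2} applies with $D=1$ as well, since it comes from Kawohl's explicit torsion function, which is valid in every dimension. It gives
\[
T\left(p;B^D_1\right)=D(D+q)^{p-1},\qquad T\left(p;B^1_1\right)=(1+q)^{p-1},
\]
and hence
\[
\gamma_{D,p}^{-p}=D\left(\frac{D+q}{1+q}\right)^{p-1}.
\]

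First I show that $\gamma_{D,p}>1/D$ for every $p>1$. Since $D\ge 2$ and $q>0$, we have $D+q<D+Dq$, so $(D+q)/(1+q)<D$. Raising to the power $p-1>0$ gives $\gamma_{D,p}^{-p}<D\cdot D^{p-1}=D^p$, that is, $\gamma_{D,p}>D^{-1}$.

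Next I show that $\gamma_{D,p}\to 1/D$ as $p\to1^+$. In this limit $q\to\infty$, so $(D+q)/(1+q)\to1$, and this quantity stays bounded between $1$ and $D$. Since the exponent $p-1\to0$, the factor $\left(\frac{D+q}{1+q}\right)^{p-1}$ tends to $1$. Therefore $\gamma_{D,p}^{-p}\to D$, and so $\gamma_{D,p}\to D^{-1}$. The map $p\mapsto\gamma_{D,p}$ is also continuous on $(1,\infty)$, because it is an explicit elementary expression in $p$.

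With these two facts, the converse assertion follows directly. If $D^{-1}b\ge a$, then $\gamma_{D,p}b>D^{-1}b\ge a$ for every $p\in(1,\infty)$. The strict-inequality clause of Theorem \ref{maintheorem} then gives $T(p;\Omega_b)<T(p;\Omega_a)$ for all $\Omega_a\in\PP^D(a)$, $\Omega_b\in\PP^D(b)$ and all $p>1$, which is \eqref{inegda}.

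For the first assertion, suppose $D^{-1}b<a$. Since $\gamma_{D,p}\to D^{-1}$ as $p\to1^+$, there is some $p_D\in(1,\infty)$, close to $1$, with $\gamma_{D,p_D}b<a$. Note that $p_D$ also depends on $a$ and $b$. The ``only if'' direction of Theorem \ref{maintheorem} then says that \eqref{compT} fails at $p=p_D$. Negating the universally quantified inequality produces sets $\Omega_a\in\PP^D(a)$ and $\Omega_b\in\PP^D(b)$ with $T(p_D;\Omega_a)<T(p_D;\Omega_b)$, which is \eqref{inegnu}.

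The only step that needs care is the limit $p\to1^+$. It is an indeterminate form of type $1^0$, and what resolves it is that the base is bounded between $1$ and $D$. Everything else is a direct application of Theorem \ref{maintheorem}.
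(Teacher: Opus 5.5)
Your proof is correct and follows essentially the same route as the paper: compute $\gamma_{D,p}^{-p}=D\bigl(\frac{(D+1)p-D}{2p-1}\bigr)^{p-1}$, show $\gamma_{D,p}>D^{-1}$ with $\gamma_{D,p}\to D^{-1}$ as $p\to1^+$, and apply the strict-inequality and ``only if'' parts of Theorem~\ref{maintheorem}. Your check of $\gamma_{D,p}>D^{-1}$ via $(D+q)/(1+q)<D$ spells out the paper's ``simple computations'', and using the limit directly is a cleaner version of the paper's $\varepsilon_0$/infimum argument rather than a different method.
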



\subsection{Structure of the paper.} The rest of the paper is organized as follows.  In Section \ref{sectia2} we collect several basic properties of the normalized $p$-torsional rigidity.  Section \ref{sectia3} is devoted to the proofs of the main results.  In Section \ref{sectia4} we reformulate the comparison principle in terms of the average integral of the distance function to the boundary.  In Section \ref{sec:model-families} we evaluate the scale-invariant functional $Q_p(\Omega)$ defined in \eqref{defQ} on several model families of convex domains, including rectangles, higher-dimensional orthotopes, planar ellipses, and triangles.  In Section \ref{sec:limits} we discuss the limiting regimes as $p\to1^+$ and $p\to\infty$.  Finally, Section \ref{sectia7} contains concluding remarks and {a fixed-volume monotonicity consequence for the unnormalised torsional rigidity}.

\section{Properties of $T(p;\Omega)$}\label{sectia2} 

In this section we recall several known properties of $T(p;\Omega)$, when $D\geq 1$, $p\in(1,\infty)$ and $\Omega\in\PP^D$. These inequalities will be repeatedly used in the proofs of our main results. These include sharp lower and upper bounds of {\it Hersch-Protter} and {\it Buser type} inequalities, as well as a simple scaling rule.

\subsection{A lower bound of $T(p;\Omega)$}\label{subsection2.1}

According to F. Della Pietra, N. Gavitone, \& S. Guarino Lo Bianco \cite[Theorem 4.3]{DPGGLB} (see also F. Prinari \& A.C. Zagati \cite[Corollary 1.2]{PZ} applied with $q=1$) one has the following sharp {\it Hersch-Protter type inequality}
\begin{equation}\label{HPineq}
	T(p;\Omega)\geq\left(\frac{2p-1}{p-1}\right)^{p-1}\frac{1}{R_\Omega^p},\;\;\;\forall\;\Omega\in\PP^D,\;\forall\;p\in(1,\infty)\,.
\end{equation}
Note that $\left(\frac{2p-1}{p-1}\right)^{p-1}=T\left(p;B^1_{1}\right)$.
Moreover, we {recall} that by the proof of Corollary 1.2 (p. 13) from  \cite{PZ} (see also Subsections \ref{sectia5.1} and \ref{sectia5.2} below) we know that inequality (\ref{HPineq}) is {\it asymptotically sharp} by using, for example, the slab-type sequences $\Omega_L:=(-L,L)^{D-1}\times(0,2R)$ and observing that $\Omega_L\in\PP^D(R)$ (when $L>R$) and 
$$\lim_{L\rightarrow\infty}T\left(p;\Omega_L\right)=\left(\frac{2p-1}{p-1}\right)^{p-1}\frac{1}{R^p},\;\;\;\forall\;p\in(1,\infty)\,.$$

\subsection{An upper bound of $T(p;\Omega)$}

According to \cite[Theorem 1.1(ii)]{DPGGLB} (see also \cite{brasco2} for a similar result) one has the following sharp {\it Buser-type inequality}
\begin{equation}\label{buser}
	T\left(p;\Omega\right)\leq D\left(D+\tfrac{p}{p-1}\right)^{p-1}\frac{1}{R_\Omega^p},
\qquad \forall\,\Omega\in\PP^D,\;\forall\;p\in(1,\infty)\,.
\end{equation}
{Moreover, equality holds, up to translations and dilations, for balls.} More precisely, $D\Bigl(D+\tfrac{p}{p-1}\Bigr)^{p-1}=T\left(p;B^D_{1}\right)$.

\subsection{Rescaled domains}
Finally, here we note that if $t>0$ and $\Omega\in\PP^D$, then the rescaled domain $t\Omega:=\{tx: x\in\Omega\}$ satisfies (see, e.g., \cite[relation (2.5) from p. 5]{EM}) $R_{t\Omega}=tR_\Omega$, that is $t\Omega\in\PP^D(tR_\Omega)$, and 
\begin{equation}\label{rescaled}
T(p;t\Omega)={t^{-p}}T(p;\Omega),\;\;\;\forall\;p\in(1,\infty)\,.
\end{equation}
This scaling property will play a key role in the construction of extremal sequences in the proof of Theorem \ref{maintheorem}.

\section{Proof of the main results}\label{sectia3}
\subsection{Proof of Theorem \ref{maintheorem}}
Let  $\Omega\in\PP^D$.  We introduce the following scale-invariant quantity
\begin{equation}\label{defQ}
Q_p(\Omega):=\Bigg(\frac{T(p;\Omega)\,R_\Omega^p}{\big(\tfrac{2p-1}{p-1}\big)^{p-1}}\Bigg)^{1/p},
\end{equation}
and divide the proof into four steps.
\medskip

{\it  Step 1  (Choosing the constant $\gamma_{D,p}$).} 

{We first record a geometric corridor that will be used in Section~\ref{sec:model-families}. By \cite[Main Theorem]{brasco2} (applied with $q=1$) one has the Buser-type estimate
\[
T(p;\Omega)<\left(\frac{2p-1}{p-1}\right)^{p-1}\left(\frac{P(\Omega)}{|\Omega|}\right)^p,
\qquad \forall\,\Omega\in\PP^D,\ \forall\,p>1,
\]
where $P(\Omega)$ denotes the perimeter of $\Omega$, together with the geometric inequality (see \cite[Lemma B.1]{brasco2})
\[
\frac{R_\Omega}{D}\le\frac{|\Omega|}{P(\Omega)}<R_\Omega,
\qquad \forall\,\Omega\in\PP^D.
\]
Combining these with the Hersch--Protter inequality \eqref{HPineq} and recalling the definition \eqref{defQ} of $Q_p$, we obtain the geometric corridor
\begin{equation}\label{corridorQ}
1\;\leq\; Q_p(\Omega)\;<\;R_\Omega\,\frac{P(\Omega)}{|\Omega|}\;\leq\; D,
\qquad \forall\,\Omega\in\PP^D,\ \forall\,p>1\,.
\end{equation}
Combining instead Hersch-Protter inequality \eqref{HPineq},  Buser-type bound \eqref{buser}, and the facts that $\left(\frac{2p-1}{p-1}\right)^{p-1}=T\left(p;B^1_{1}\right)$ and $D\Bigl(D+\tfrac{p}{p-1}\Bigr)^{p-1}=T\left(p;B^D_{1}\right)$,
we obtain the sharp global corridor
\begin{equation}\label{corridorQsharp}
1\;\leq\; Q_p(\Omega)\;\leq\;\gamma_{D,p}^{-1},
\qquad \forall\,\Omega\in\PP^D,\ \forall\,p>1\,,
\end{equation}
where $\gamma_{D,p}$ is defined by relation (\ref{constantanoua}) and the upper endpoint is attained, up to translations and dilations, by balls.}

This motivates the definition of the following constants:
\begin{equation}\label{defalphabeta}
\alpha(p;D):=\inf_{\Omega\in\PP^D} Q_p(\Omega),\qquad
\beta(p;D):=\sup_{\Omega\in\PP^D} Q_p(\Omega).
\end{equation}
{ The lower endpoint of the corridor \eqref{corridorQ} is sharp. Indeed, the
Hersch--Protter type inequality \eqref{HPineq} is sharp, as recalled in
Subsection~\ref{subsection2.1} (see \cite[Theorem 4.3 and Proposition 4.4]{DPGGLB} and \cite[Corollary 1.2]{PZ}),
so that
\begin{equation}\label{alphaone}
\alpha(p;D)=1,\qquad \forall\,p>1,\ \forall\,D\ge 2.
\end{equation}
The upper endpoint, by contrast, is \emph{attained}. The result
\cite[Theorem 1.1(ii)]{DPGGLB} provides the corresponding sharp lower bound for
the scale-invariant torsional functional
$T_p(\Omega)\big/\bigl(|\Omega|\,R_\Omega^{\,p/(p-1)}\bigr)$, the minimum being
realized by the ball; via the definition \eqref{defTnorm} of $T(p;\Omega)$, this
is equivalent to the sharp upper bound for $T(p;\Omega)\,R_\Omega^{\,p}$.
Combining the explicit value
$T(p;B^D_R)=D\bigl(D+\tfrac{p}{p-1}\bigr)^{p-1}R^{-p}$ recalled in Subsection~\ref{subsect1.2}
with \cite[Theorem 1.1(ii)]{DPGGLB}, one obtains
\[
T(p;\Omega)\,R_\Omega^{\,p}\le D\Bigl(D+\tfrac{p}{p-1}\Bigr)^{p-1},
\qquad \forall\,\Omega\in\PP^D,
\]
with equality, up to translations and dilations, for balls. Recalling the
definitions \eqref{defQ} of $Q_p$ and (\ref{constantanoua}) of $\gamma_{D,p}$, this gives
\begin{equation}\label{betaball}
\beta(p;D)=Q_p\bigl(B^D_R\bigr)
=\left(\frac{p-1}{2p-1}\right)^{(p-1)/p}
\Bigl[\,D\Bigl(D+\tfrac{p}{p-1}\Bigr)^{p-1}\Bigr]^{1/p}=\gamma_{D,p}^{-1}\,,
\end{equation}
the supremum being attained, up to translations and dilations, for balls. In
particular, for $D=2$, $p=2$ one obtains $\beta(2;2)=\sqrt{8/3}$.}
Note that the use of infima and suprema reflects the fact that {the
infimum} is not attained within the class $\PP^D$, but {only}
approached by suitable degenerating sequences of convex domains,
{ whereas the supremum is attained by balls}.
We then have 
\begin{equation}\label{estimareab}
	1\,=\,\alpha(p;D)<\beta(p;D)=\gamma_{D,p}^{-1}\,.
\end{equation} 
In particular we note that 
$$\gamma_{D,p}=\frac{\alpha(p;D)}{\beta(p;D)}<1\,.$$
\medskip

{\it Step 2 (Direct implication).}  
Assume $\gamma_{D,p}b\geq a$.  We want to show that inequality (\ref{compT}) is valid. Suppose, for contradiction, that there exist domains $\Omega_a\in\PP^D(a)$ and $\Omega_b\in\PP^D(b)$ such that
\[
T(p;\Omega_a)<T(p;\Omega_b).
\]
Recalling the definition of $Q_p(\Omega)$ and combining this inequality with relations (\ref{HPineq}), (\ref{buser}) and (\ref{betaball}) we get
$$\left(\frac{2p-1}{p-1}\right)^{p-1}\frac{1}{a^p}\leq T(p;\Omega_a)< T(p;\Omega_b)\leq\left(\frac{2p-1}{p-1}\right)^{p-1}\frac{\gamma_{D,p}^{-p}}{b^p}\,.$$
This leads to the contradiction $\gamma_{D,p}b< a$.   Hence $T(p;\Omega_b)\leq T(p;\Omega_a)$ must hold.  

\medskip
Note that the above argument shows that for any two real numbers
$a,b\in(0,\infty)$ satisfying $\gamma_{D,p} b \ge a$, inequality
\eqref{compT} holds true. In particular, the assumption that $a$ and $b$
are fixed at the beginning of the proof of Theorem~\ref{maintheorem}
can be relaxed for this part of Step~2.
\medskip

Next, let us assume $\gamma_{D,p}b>a$. Let $\Omega_a\in\PP^D(a)$ and $\Omega_b\in\PP^D(b)$. Then, considering the rescaled domain $\tfrac{\gamma_{D,p}b}{a}\Omega_a\in\PP^D(\gamma_{D,p}b)$,  by the scaling property (\ref{rescaled}) we have
\begin{equation}\label{rescalarea}
	T\left(p;\frac{\gamma_{D,p}b}{a}\Omega_a\right)=\left(\frac{\gamma_{D,p}b}{a}\right)^{-p}T(p;\Omega_a)\,.
\end{equation}
Finally, since $\gamma_{D,p}\in(0,1)$ we have $a':=\gamma_{D,p}b<b$. 
By the remark above, relation \eqref{compT} holds for any pair of inradii satisfying 
$\gamma_{D,p}b\ge a'$, hence in particular for $(a',b)$. 
Rescale $\Omega_a$ to $\Omega_a':=\frac{a'}{a}\,\Omega_a\in\PP^D(a')$. 
Applying \eqref{compT} to $(\Omega_a',\Omega_b)$ and using \eqref{rescaled} yields
$$T(p;\Omega_b)\ \le\ T\!\left(p;\Omega_a'\right)
= \left(\frac{a'}{a}\right)^{-p}\,T(p;\Omega_a)
= \left(\frac{\gamma_{D,p}b}{a}\right)^{-p} T(p;\Omega_a).$$
{Since $\gamma_{D,p}b/a>1$, we have $(\gamma_{D,p}b/a)^{-p}<1$, and therefore the right-hand side is strictly smaller than $T(p;\Omega_a)$}, whence
$$T(p;\Omega_b) \;<\ T(p;\Omega_a).$$
Therefore, inequality (\ref{compT}) holds strictly.

\medskip
{\it  Step 3 (Converse implication).}  We assume that  relation (\ref{compT}) holds true, that is $T(p;\Omega_b)\leq T(p;\Omega_a)$ for all $\Omega_a\in\PP^D(a)$ and $\Omega_b\in\PP^D(b)$,  and we show that $\gamma_{D,p}b\geq a$.

Indeed, by the definition of $\alpha(p;D)$, there exists an extremal sequence of sets, $(\omega_n)_n\subset\PP^D$  such that
\begin{equation}\label{relatia1X}
\lim_{n\to\infty} Q_p(\omega_n)=\alpha(p;D)=1\,.
\end{equation}
Moreover, by the scaling invariance property (\ref{rescaled}), we recall that we have 
\[
T(p;t\Omega)R_{t\Omega}^p=T(p;\Omega)R_\Omega^p,\qquad \forall\,\Omega\in\PP^D,\;\forall\,t>0,
\]
Now, the limit in \eqref{relatia1X}  remains valid if we replace $\omega_n$ by the rescaled domain
$\tfrac{a}{R_{\omega_n}}\omega_n\in\PP^D(a)$. Hence we may assume $(\omega_n)_n\subset\PP^D(a)$  (since $R_{\frac{a}{R_{\omega_n}}\omega_n}=a$). In this case, relation (\ref{relatia1X}) becomes
\begin{equation}\label{relatia1}
	\lim_{n\rightarrow\infty}T(p;\omega_n)^{1/p}=\left(\frac{2p-1}{p-1}\right)^{(p-1)/p}\frac{1}{a}=T(p;B^1_1)^{1/p}\frac{1}{a}\,.
\end{equation}
On the other hand, since for each integer $n\geq 1$ we have $\omega_n\in\PP^D(a)$ and $B^D_b\in\PP^D(b)$, relation (\ref{compT}) gives
\begin{equation}\label{relatia3}
	T(p;B^D_b)\leq T(p;\omega_n),\;\;\;\forall\;n\geq 1\,,
\end{equation}
or, equivalently,
\begin{equation}\label{relatia4}
	T(p;B^D_b)^{1/p}\leq T(p;\omega_n)^{1/p},\;\;\;\forall\;n\geq 1\,.
\end{equation}
Letting $n\rightarrow\infty$ in (\ref{relatia4}) and using (\ref{relatia1}), we obtain
$$T(p;B^D_b)^{1/p}\;\leq\;T(p;B^1_1)^{1/p}\frac{1}{a}\,,$$
which in view of (\ref{constantanoua}) and (\ref{rescaled}) gives, $a\leq\gamma_{D,p}b$.
\medskip

{\it Step 4 (Equality case).}  We assume that $\gamma_{D,p}b=a$  and we show that the equality in (\ref{compT}) occurs asymptotically.

Indeed, as in the proof of Step 3, there exists a sequence of sets $(\omega_n)_n\subset\PP^D(a)$  such that relation (\ref{relatia1})  holds true. Since $\gamma_{D,p}b=a$ relation (\ref{relatia1}) reads, equivalently,     as follows
$$\lim_{n\rightarrow\infty}T(p;\omega_n)^{1/p}=\left(\frac{2p-1}{p-1}\right)^{(p-1)/p}\frac{1}{a}=T(p;B^1_1)^{1/p}\frac{1}{\gamma_{D,p}b}\,.$$
Now, recalling that by relation (\ref{constantanoua}) we have
$$\gamma_{D,p}:=\left[\frac{T\left(p;B^D_{1}\right)}{T\left(p;B^1_{1}\right)}\right]^{-1/p}\,,$$
the last two relations and (\ref{rescaled}) imply
$$\lim_{n\rightarrow\infty}T(p;\omega_n)^{1/p}=T(p;B^D_1)^{1/p}\frac{1}{b}=T(p;B^D_b)^{1/p}\,,$$
so equality in \eqref{compT} holds asymptotically, as claimed. The proof of Theorem \ref{maintheorem} is thus complete. \endproof 

\subsection{Proof of Corollary \ref{maincor}} For each $p\in(1,\infty)$ let $\gamma_{D,p}$ be defined by relation (\ref{constantanoua}). Simple computations show that
\begin{equation}\label{3iunie2026}
	\gamma_{D,p}=\left[\frac{1}{D}\left(\frac{2p-1}{{(D+1)p-D}}\right)^{p-1}\right]^{1/p}>\frac{1}{D},\;\;\;\forall\;p>1\,,
\end{equation}
and the lower bound $D^{-1}$ is {optimal}, since $\lim_{p\rightarrow 1^+}\gamma_{D,p}=D^{-1}$. Thus, we have 
$$\inf_{p\in(1,\infty)}\gamma_{D,p}=D^{-1}\,.$$
Assume that $D^{-1}b<a$. Then there exists $\varepsilon_0>0$ such that 
$(D^{-1}+\varepsilon_0)b<a$ (for instance, one may take 
$\varepsilon_0=\tfrac{1}{2}\big(\tfrac{a}{b}-\tfrac{1}{D}\big)$). 
Since $D^{-1}=\inf_{p\in(1,\infty)}\gamma_{D,p}$, it follows that there exists 
$p_{\varepsilon_0}\in(1,\infty)$ such that
$$\gamma_{D,p_{\varepsilon_0}}<D^{-1}+\varepsilon_0\,.$$
Consequently $\gamma_{D,p_{\varepsilon_0}}b<a$.  Applying Theorem \ref{maintheorem} with $p=p_{\varepsilon_0}$, it follows that there exist two sets $\Omega_a\in\PP^D(a)$ and $\Omega_b\in\PP^D(b)$ such that 
$$T\left(p_{\epsilon_0};\Omega_a\right)<T\left(p_{\epsilon_0};\Omega_b\right)\,,$$
i.e. inequality (\ref{inegnu}) holds true with $p_D=p_{\epsilon_0}$.
\medskip

On the other hand, if $D^{-1}b\geq a$, from relation (\ref{3iunie2026}), we have $\gamma_{D,p}b> D^{-1}b\geq a$, for all $p\in(1,\infty)$. By Theorem \ref{maintheorem} we deduce that (\ref{inegda}) holds.  The proof of Corollary \ref{maincor} is now complete.
 \endproof

\section{An extension in terms of the average distance function}\label{sectia4}

A key argument in the analysis of the problem considered in \cite{MSD} and in the formulation of Theorem 1 therein in terms of the inradius of the domain, is the following asymptotic formula due to P. Juutinen, P. Lindqvist, and J.J. Manfredi \cite[Lemma 1.5]{JLM99} (see also  N. Fukagai, M. Ito, and K. Narukawa \cite[Theorem 3.1]{FIN}):
\begin{equation}\label{asylambda}
\lim_{p\rightarrow\infty}\sqrt[p]{\lambda_1(p;\Omega)}=R_\Omega^{-1},\;\;\;\forall\;\Omega\in\PP^D\,.
\end{equation}
Denote by $\delta_\Omega$ the distance function to the boundary of $\Omega \in\PP^D$, i.e.
$$\delta_\Omega(x):=\inf_{y\in\partial\Omega}|x-y|,\qquad \forall x\in\Omega,$$
and define \emph{the average integral of the distance function} by
$$\delta(\Omega):=\frac{1}{|\Omega|}\int_\Omega \delta_\Omega(x)\,dx,\qquad \Omega\in\PP^D.$$
We recall that by \cite[relation (2.4)]{EM}, we know that we have 
\begin{equation}\label{asyT}
\lim_{p\rightarrow\infty}\sqrt[p]{T(p;\Omega)}=\delta(\Omega)^{-1},\;\;\;\forall\;\Omega\in\PP^D\,.
\end{equation}
For each real number $r>0$, we define
$$\PP^D[r]:=\{\Omega\in\PP^D \, | \; \delta(\Omega)=r\}\,.$$
Let now $0<a<b$ be two real numbers. Using (\ref{asyT}),  we observe that for any {\it fixed} domains $\Omega_a\in\PP^D[a]$ and  $\Omega_b\in\PP^D[b]$, there exists a constant $p_0=p_0(\Omega_a,\Omega_b)$ such that
\begin{equation}\label{Tstea}
	T(p;\Omega_b)<T(p;\Omega_a),\;\;\;\forall\;p>p_0\,.
\end{equation}
This naturally leads to a reformulation of the question raised at the end of Subsection \ref{subsect1.2}, namely: {\it is it true that, given $\Omega_a\in\PP^D[a]$ and $\Omega_b\in\PP^D[b]$, with $0<a<b$ fixed, inequality \eqref{Tstea} holds for every $p\in(1,\infty)$?}

\medskip
To investigate this question, we recall the following sharp inequality from L. Briani, G. Buttazzo, and F. Prinari \cite[Proposition 6.1]{BBP}
\begin{equation}\label{treisteleparalele}
	\frac{1}{D+1}\leq\frac{\delta(\Omega)}{R_\Omega}\leq\frac{1}{2},\;\;\;\forall\;\Omega\in\PP^D\,.
\end{equation}
Note that the value $(D+1)^{-1}$ from the left-hand side in the above inequality is achieved if $\Omega$ is a ball while the value $2^{-1}$ from the right-hand side in the above inequality is asymptotically attained by considering (for example) a sequence of slab domains $\Omega_\varepsilon:=(0,1)^{D-1}\times(0,\varepsilon)$ for $\varepsilon>0$, and letting $\varepsilon\rightarrow 0^+$ (see, the proof of Proposition 6.1, pp. 17-18, in \cite{BBP}).

Combining now \eqref{treisteleparalele} with \eqref{HPineq} and \eqref{buser}, we obtain
{\Small
\begin{equation}\label{patrusteleparalele}
\left(\frac{2p-1}{p-1}\right)^{p-1}
\left(\frac{1}{(D+1)\delta(\Omega)}\right)^p
\;\le\;
T(p;\Omega)
\;\leq\;
D\left(D+\frac{p}{p-1}\right)^{p-1}
\left(\frac{1}{2\,\delta(\Omega)}\right)^p,
\end{equation}} 
for all $\Omega\in\PP^D$ and all $p\in(1,\infty)$.

Moreover, since $\delta(t\Omega)=t\,\delta(\Omega)$ for all $t>0$, it follows that
$t\Omega\in\PP^D[t\,\delta(\Omega)]$, and hence scale-invariance remains valid when the functional is expressed
in terms of $\delta(\Omega)$ rather than $R_\Omega$. Consequently, for each $D\ge2$ and $p\in(1,\infty)$ we introduce the scale-invariant quantity
\begin{equation}\label{Qnou}
\overline Q_p(\Omega)
:=
\left[
\frac{T(p;\Omega)\,\delta(\Omega)^p}
{\big(\frac{2p-1}{p-1}\big)^{p-1}}
\right]^{1/p},
\qquad \Omega\in \PP^D,
\end{equation}
From \eqref{patrusteleparalele} and \eqref{Qnou} we immediately obtain the scale-free corridor
\begin{equation}\label{corridorQbar}
\frac{1}{D+1}\ \le\ \overline Q_p(\Omega)\ \le\ \frac{1}{2\gamma_{D,p}},
\qquad \forall\,\Omega\in\PP^D,\ \forall\,p\in(1,\infty)\,,
\end{equation}
where $\gamma_{D,p}$ is defined in relation (\ref{constantanoua}). This motivates the definition of
\begin{equation}\label{alphabetanou}
\overline \alpha(p;D)
:=\inf_{\Omega\in\PP^D}\overline Q_p(\Omega),
\qquad
\overline\beta(p;D)
:=\sup_{\Omega\in\PP^D}\overline Q_p(\Omega).
\end{equation}
We then set 
\begin{equation}\label{gamanou}
\overline\gamma_{D,p}
:=
\frac{\overline\alpha(p;D)}{\overline\beta(p;D)}.
\end{equation}
and deduce by (\ref{corridorQbar}) that $\overline\gamma_{D,p}\in\left[\frac{2\gamma_{D,p}}{D+1},1\right)$.
All these arguments allow us to repeat the proof of Theorem \ref{maintheorem}, with $R_\Omega$ replaced by $\delta(\Omega)$, and obtain the following extended result:

\begin{theorem}\label{mainteo2} Let $D\geq 2$ be a fixed integer. Let $p>1$ and $0<a<b$ be three given real numbers and define $\gamma_{D,p}$ as in relation (\ref{constantanoua}). Then there exists a constant $\overline\gamma_{D,p}\in \big[\,\tfrac{2\gamma_{D,p}}{D+1},1\big)$, depending only on $D$ and $p$, such that we have
\begin{equation}\label{compTnou}
	T\left(p;\Omega_b\right)\leq T\left(p;\Omega_a\right),\;\;\;\forall\;\Omega_a\in\PP^D[a],\;{\rm and}\;\Omega_b\in\PP^D[b]\,,
\end{equation}
if and only if $\overline\gamma_{D,p}b\geq a$, with strict inequality in (\ref{compTnou}) whenever $\overline\gamma_{D,p}b>a$.

Moreover, when $\overline\gamma_{D,p}b=a$ then the equality in (\ref{compTnou}) occurs asymptotically, in the sense that there exist two sequences of domains $\{\widetilde\omega_n\}_n\subset\PP^D[a]$ and $\{\widetilde\Omega_n\}_n\subset\PP^D[b]$ for which we have
\begin{equation}\label{asycompTnou}
	\lim_{n\rightarrow\infty}T\left(p;\widetilde\omega_n\right)= \lim_{n\rightarrow\infty}T\left(p;\widetilde\Omega_n\right)\,.
\end{equation}
\end{theorem}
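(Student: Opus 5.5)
The proof repeats the four steps of the proof of Theorem \ref{maintheorem}, with $R_\Omega$ replaced by $\delta(\Omega)$ and $Q_p$ replaced by $\overline Q_p$ from \eqref{Qnou}. The constant is $\overline\gamma_{D,p}=\overline\alpha(p;D)/\overline\beta(p;D)$ from \eqref{gamanou}.

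\emph{Step 1: the range of $\overline\gamma_{D,p}$.} By \eqref{corridorQbar},
\[
\frac{1}{D+1}\le\overline\alpha(p;D)\le\overline\beta(p;D)\le\frac{1}{2\gamma_{D,p}},
\]
so $\overline\gamma_{D,p}\ge 2\gamma_{D,p}/(D+1)$. For the strict upper bound $\overline\gamma_{D,p}<1$ I need two domains with different values of $\overline Q_p$, so that $\overline\alpha<\overline\beta$. Take the ball and the slab sequence $\Omega_L=(-L,L)^{D-1}\times(0,2R)$ from Subsection \ref{subsection2.1}.
\begin{itemize}
\item For the ball, $\delta(B^D_R)=R/(D+1)$. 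Hence $\overline Q_p(B^D_R)=\gamma_{D,p}^{-1}/(D+1)$.
\item For the slabs, $T(p;\Omega_L)R^p\to T(p;B^1_1)$ and $\delta(\Omega_L)\to R/2$. Hence $\overline Q_p(\Omega_L)\to 1/2$.
\end{itemize}
Strictness requires $\gamma_{D,p}^{-1}\neq (D+1)/2$. This is the main obstacle, because a single pair of test domains may coincidentally give equal values. If it fails for some $(D,p)$, I would instead compare the ball with a thin slab of aspect ratio $\varepsilon$ or with a cube, using the explicit formulas. As a further fallback I would argue non-constancy of $\overline Q_p$ via \eqref{asyT} and perturbation. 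Since $\gamma_{D,p}<1$, in many cases $\gamma_{D,p}^{-1}/(D+1)$ and $1/2$ should visibly differ, for instance by comparing them as $p\to1^+$.

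\emph{Step 2: sufficiency.} Assume $\overline\gamma_{D,p}b\ge a$ and suppose $T(p;\Omega_a)<T(p;\Omega_b)$. By the definitions,
\[
\overline\alpha^p\Big(\tfrac{2p-1}{p-1}\Big)^{p-1}a^{-p}\le T(p;\Omega_a)<T(p;\Omega_b)\le\overline\beta^p\Big(\tfrac{2p-1}{p-1}\Big)^{p-1}b^{-p}.
\]
This forces $\overline\gamma_{D,p}b<a$, a contradiction. For strictness when $\overline\gamma_{D,p}b>a$, rescale $\Omega_a$ to $\delta$-value $a'=\overline\gamma_{D,p}b$, using $\delta(t\Omega)=t\delta(\Omega)$ and \eqref{rescaled}. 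Applying the non-strict inequality to the pair $(a',b)$ then gains the factor $(a'/a)^{-p}<1$, exactly as in Step 2 of Theorem \ref{maintheorem}.

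\emph{Steps 3 and 4: necessity and the asymptotic equality case.} Pick sequences $\omega_n$ with $\overline Q_p(\omega_n)\to\overline\alpha$ and $\Omega_n$ with $\overline Q_p(\Omega_n)\to\overline\beta$. Rescale them so that $\omega_n\in\PP^D[a]$ and $\Omega_n\in\PP^D[b]$. Then
\[
T(p;\omega_n)^{1/p}\to\overline\alpha\Big(\tfrac{2p-1}{p-1}\Big)^{(p-1)/p}a^{-1},\qquad T(p;\Omega_n)^{1/p}\to\overline\beta\Big(\tfrac{2p-1}{p-1}\Big)^{(p-1)/p}b^{-1}.
\]
Assuming \eqref{compTnou}, we have $T(p;\Omega_n)\le T(p;\omega_n)$ for all $n$. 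Passing to the limit gives $\overline\beta/b\le\overline\alpha/a$, that is, $a\le\overline\gamma_{D,p}b$.

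When $\overline\gamma_{D,p}b=a$, the two limits coincide, which is \eqref{asycompTnou} with $\widetilde\omega_n=\omega_n$ and $\widetilde\Omega_n=\Omega_n$. Two sequences are needed here, rather than a fixed $\Omega_b$, because the supremum $\overline\beta$ need not be attained.
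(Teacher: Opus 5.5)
Your plan is the paper's own. The paper obtains Theorem~\ref{mainteo2} by rerunning the four steps of the proof of Theorem~\ref{maintheorem} with $R_\Omega$ replaced by $\delta(\Omega)$ and $\overline\gamma_{D,p}=\overline\alpha(p;D)/\overline\beta(p;D)$. Your Steps~2--4 carry this out correctly, including the use of two sequences in the equality case, since $\overline\beta$ need not be attained.

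The one point you leave unfinished is $\overline\gamma_{D,p}<1$, and your caution there is justified. The paper deduces it from \eqref{corridorQbar}, but that corridor only yields $\overline\alpha(p;D)\le\overline\beta(p;D)$. Your fallbacks would not settle it:
\begin{itemize}
\item the comparison as $p\to1^+$ only covers $p$ close to $1$;
\item \eqref{asyT} cannot separate domains, because $\overline Q_p(\Omega)\to\frac12$ for every $\Omega$ as $p\to\infty$.
\end{itemize}
In fact $\gamma_{D,p}^{-1}\to\frac{D+1}{2}$ as $p\to\infty$, so the ball value $\gamma_{D,p}^{-1}/(D+1)$ really does approach the slab value $\frac12$.

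Your ball-versus-slab pair nevertheless works for every finite $p$. By \eqref{3iunie2026},
\[
\Bigl(\frac{2}{D+1}\Bigr)^{p}\gamma_{D,p}^{-p}=\frac{2D}{D+1}\,(1-u_p)^{p-1},\qquad u_p:=\frac{D-1}{(D+1)(2p-1)}\in(0,1).
\]
Put $h(p):=\ln\frac{2D}{D+1}+(p-1)\ln(1-u_p)$. Differentiating gives
\[
h'(p)=\ln(1-u_p)+u_p\,\frac{(D+1)(p-1)}{(D+1)p-D}<\ln(1-u_p)+u_p<0.
\]
So $h$ decreases strictly on $(1,\infty)$ towards $\ln(1+t)-\frac t2$, where $t:=\frac{D-1}{D+1}\in(0,1)$. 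This limit is positive, because $\ln(1+t)>t-\frac{t^2}{2}\ge\frac t2$.

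Hence $h>0$, that is, $\gamma_{D,p}^{-1}>\frac{D+1}{2}$ for all $p>1$ and $D\ge2$. Therefore
\[
\overline\beta(p;D)\ \ge\ \overline Q_p(B^D_R)=\frac{\gamma_{D,p}^{-1}}{D+1}\ >\ \frac12=\lim_{L\to\infty}\overline Q_p(\Omega_L)\ \ge\ \overline\alpha(p;D),
\]
which gives $\overline\gamma_{D,p}<1$ and completes your Step~1.
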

\medskip
Next, we set
$$\overline\gamma_{D}:=\inf_{p\in(1,\infty)}\overline\gamma_{D,p}\,.
$$
Since for any $p>1$ we have $\overline\gamma_{D,p}\geq\frac{2\gamma_{D,p}}{D+1}$ and by (\ref{3iunie2026}) we know that $\gamma_{D,p}>D^{-1}$ it follows that $\overline\gamma_{D,p}>\frac{2}{D(D+1)}$, for all $p>1$. Thus, $\overline\gamma_{D}\geq\frac{2}{D(D+1)}$. Taking into account that fact, as a direct consequence of Theorem \ref{mainteo2}, we obtain the following corollary, which extends Corollary \ref{maincor} to the average-distance framework:
\medskip

\begin{cor}\label{maincor2}
	Let $D\geq 2$ be a fixed integer and $0<a<b$ be two given real numbers. Then there exists a constant $\overline\gamma_{D}\in \big[\,\tfrac{2}{D(D+1)},1\big)$, depending only on $D$, such that the following assertions hold.
	\smallskip
	
	If  $\overline\gamma_{D}b<a$ then there exists a real number $q_D\in(1,\infty)$ and two sets $\Omega_a\in\PP^D[a]$ and $\Omega_b\in\PP^D[b]$ such that
	\begin{equation}\label{inegnu1}
	T\left(q_D;\Omega_a\right)<T\left(q_D;\Omega_b\right)\,;
    \end{equation}
    
  Conversely, if $\overline\gamma_{D}b\geq a$ then for each set $\Omega_a\in\PP^D[a]$ and $\Omega_b\in\PP^D[b]$ we have
    \begin{equation}\label{inegda2}
	T\left(p;\Omega_b\right)\leq T\left(p;\Omega_a\right),\;\;\;\forall\;p\in(1,\infty)\,,
    \end{equation}
    with strict inequality in (\ref{inegda2}) when $\overline\gamma_{D}b>a$.
\end{cor}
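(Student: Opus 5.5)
The plan is to deduce Corollary \ref{maincor2} directly from Theorem \ref{mainteo2}, applied for each fixed $p$, together with the definition $\overline\gamma_{D}=\inf_{p\in(1,\infty)}\overline\gamma_{D,p}$, in the same way Corollary \ref{maincor} was deduced from Theorem \ref{maintheorem}.

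First I will check that $\overline\gamma_D$ lies in the stated interval. The lower bound $\overline\gamma_D\geq \frac{2}{D(D+1)}$ has already been observed just before the statement. It follows from $\overline\gamma_{D,p}\geq \frac{2\gamma_{D,p}}{D+1}$ and from $\gamma_{D,p}>D^{-1}$, see (\ref{3iunie2026}). For the upper bound, Theorem \ref{mainteo2} gives $\overline\gamma_{D,p}<1$ for every $p$. An infimum is bounded by any single member, so $\overline\gamma_D\leq\overline\gamma_{D,2}<1$.

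For the first assertion, assume $\overline\gamma_D b<a$. By the definition of infimum there is $q_D\in(1,\infty)$ with $\overline\gamma_{D,q_D}<a/b$, that is, $\overline\gamma_{D,q_D}b<a$. One can, for example, take $\varepsilon_0=\frac12(\frac ab-\overline\gamma_D)$ as in the proof of Corollary \ref{maincor}. Theorem \ref{mainteo2} with $p=q_D$ says that (\ref{compTnou}) holds if and only if $\overline\gamma_{D,q_D}b\geq a$. Since this fails, (\ref{compTnou}) fails for $p=q_D$. Negating the universal statement produces $\Omega_a\in\PP^D[a]$ and $\Omega_b\in\PP^D[b]$ with $T(q_D;\Omega_b)>T(q_D;\Omega_a)$, which is exactly (\ref{inegnu1}).

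For the converse, assume $\overline\gamma_D b\geq a$. Then for every $p\in(1,\infty)$ we have $\overline\gamma_{D,p}b\geq\overline\gamma_D b\geq a$. Theorem \ref{mainteo2} therefore gives $T(p;\Omega_b)\leq T(p;\Omega_a)$ for all $\Omega_a\in\PP^D[a]$ and $\Omega_b\in\PP^D[b]$, and this holds for every $p$, which is (\ref{inegda2}). If moreover $\overline\gamma_D b>a$, then $\overline\gamma_{D,p}b>a$ for each $p$, and the strict part of Theorem \ref{mainteo2} yields strict inequality for every $p$.

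There is no genuinely hard step here. The only points needing care are the quantifier bookkeeping: the pointwise-in-$p$ application of the theorem, and using infimum rather than minimum (no continuity in $p$ of $\overline\gamma_{D,p}$ is needed). One must also make sure the negation of the ``if and only if'' in Theorem \ref{mainteo2} really yields a pair of domains with the reversed strict inequality, which it does because $\leq$ failing for some pair means $>$ for that pair.
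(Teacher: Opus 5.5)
Your proposal is correct and is essentially the paper's own argument. The paper obtains Corollary \ref{maincor2} as a direct consequence of Theorem \ref{mainteo2} applied for each fixed $p$, with $\overline\gamma_D=\inf_{p}\overline\gamma_{D,p}$ and the lower bound $\overline\gamma_D\geq\frac{2}{D(D+1)}$ coming from $\overline\gamma_{D,p}\geq\frac{2\gamma_{D,p}}{D+1}$ and (\ref{3iunie2026}), in the same way as the proof of Corollary \ref{maincor}; your explicit check that $\overline\gamma_D\leq\overline\gamma_{D,2}<1$ and your careful handling of the quantifiers only fill in details the paper leaves implicit.
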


\begin{remark}
The comparison constants associated with $\overline Q_p$ are also defined via
infima and suprema over $\PP^D$, and therefore the corresponding bounds should
be understood in a limiting sense. However, unlike the case of $Q_p$, the
available estimates for $\overline\gamma_{D,p}$  are not expected to be sharp.

In contrast, the explicit examples discussed in
Section~\ref{sec:model-families} {illustrate the behaviour of $Q_p$ within
several families of convex domains}. {For $Q_p$, the lower endpoint
$\alpha(p;D)=1$ is approached through degenerating families, whereas the upper
extremum $\beta(p;D)$ is attained by balls, see \eqref{betaball}.}
\end{remark}

\section{{Examples and behaviour on some families of convex sets}}
\label{sec:model-families}

In this section we evaluate the scale-invariant functional $Q_p(\Omega)$ on several model families of convex domains. {These examples illustrate the behavior of $Q_p$ within each family and the role that geometric degeneration may play, in some families, in approaching the lower endpoint $\alpha(p;D)=1$ of the corridor \eqref{corridorQ}. They should not, however, be interpreted as saturating the global upper endpoint in general: the global supremum $\beta(p;D)$ of $Q_p$ over $\PP^D$ is attained, up to translations and dilations, by balls, see \eqref{betaball}. Thus, in what follows, the family-wise estimates are based mainly on the geometric corridor \eqref{corridorQ}, while the sharp global corridor identifies the universal upper endpoint, which is attained, up to translations and dilations, by balls. Accordingly, the family bounds below are lower estimates for $\gamma_{\mathcal F}(p)$ rather than evaluations of the global constant $\gamma_{D,p}$.} Specifically, by highlighting these phenomena, we describe the behavior within each family $\mathcal F\subset \PP^D$, of the ratio
\[
\gamma_{\mathcal F}(p):=\frac{\alpha_{\mathcal F}(p)}{\beta_{\mathcal F}(p)},
\qquad
\alpha_{\mathcal F}(p):=\inf_{\Omega\in\mathcal F}Q_p(\Omega),
\qquad
\beta_{\mathcal F}(p):=\sup_{\Omega\in\mathcal F}Q_p(\Omega).
\]

\medskip
\noindent
In parallel with the average-distance framework of Section~\ref{sectia4}, we also comment on the qualitative behavior of the alternative scale-invariant quantity $\overline Q_p$ defined in \eqref{Qnou}. {While $Q_p$ yields explicit formulas or effective lower estimates for $\gamma_{\mathcal F}(p)$ in the model settings below}, for $\overline Q_p$ we only have universal (dimensional) bounds, and no comparable closed-form expressions for the associated constants are available.

\medskip
\noindent
Unless otherwise stated, all domains are open, bounded, and convex, and $R$ denotes the inradius, so that $\Omega\in\PP^D(R)$.

\subsection{Rectangles}\label{sectia5.1}
\label{subsec:rectangles}

For a given $\kappa \ge 2$, we consider the family of planar rectangles
\[
\mathcal{R}_\kappa := \big\{\, \Omega_{R,L}=(0,L)\times(0,2R) \ : \ L/R \ge \kappa \,\big\}.
\]
This family provides a simple setting in which one can track the behavior of $Q_p$
as the domain deforms from a square ($\kappa=2$) toward an infinitely long strip or line segment ($\kappa\to\infty$).

\begin{proposition}
For every $p>1$ and $\kappa \ge 2$, one has
\[
\gamma_{\mathcal{R}_\kappa}(p) \ge \frac{\kappa}{\kappa+2}.
\]
\end{proposition}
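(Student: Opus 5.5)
Since $\gamma_{\mathcal R_\kappa}(p)=\alpha_{\mathcal R_\kappa}(p)/\beta_{\mathcal R_\kappa}(p)$, the plan is to bound the numerator from below and the denominator from above using only the geometric corridor \eqref{corridorQ}, without computing $Q_p$ on any rectangle.

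\emph{Lower bound for $\alpha_{\mathcal R_\kappa}(p)$.} The left inequality in \eqref{corridorQ} gives $Q_p(\Omega)\ge 1$ for every $\Omega\in\PP^D$, hence in particular $\alpha_{\mathcal R_\kappa}(p)\ge 1$.

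\emph{Upper bound for $\beta_{\mathcal R_\kappa}(p)$.} The middle inequality in \eqref{corridorQ} gives $Q_p(\Omega)<R_\Omega P(\Omega)/|\Omega|$. Take $\Omega_{R,L}=(0,L)\times(0,2R)$ with $L/R\ge\kappa\ge 2$. Then $L\ge 2R$, so the inradius is exactly $R$, the area is $2RL$, and the perimeter is $2L+4R$. Therefore
\[
R\,\frac{P(\Omega_{R,L})}{|\Omega_{R,L}|}=\frac{R(2L+4R)}{2RL}=1+\frac{2R}{L}=1+\frac{2}{t},\qquad t:=L/R\ge\kappa .
\]
This is decreasing in $t$, so $Q_p(\Omega_{R,L})<1+2/\kappa=(\kappa+2)/\kappa$ on the whole family, and $\beta_{\mathcal R_\kappa}(p)\le(\kappa+2)/\kappa$.

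Dividing the two bounds gives $\gamma_{\mathcal R_\kappa}(p)\ge 1\big/\bigl(\tfrac{\kappa+2}{\kappa}\bigr)=\kappa/(\kappa+2)$, which is the claim.

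The argument is short and there is no real obstacle. The two points to check are:
\begin{itemize}
\item the inradius of the rectangle equals $R$, which uses $L\ge 2R$;
\item the Buser-type estimate from \cite{brasco2} (via \eqref{corridorQ}) applies to rectangles, which it does since they are convex.
\end{itemize}
As a side remark, letting $\kappa\to\infty$ squeezes $Q_p$ toward $1$ along this family, consistent with \eqref{alphaone}.
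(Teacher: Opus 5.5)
Your proposal is correct and matches the paper's proof: both bound $\alpha_{\mathcal R_\kappa}(p)\ge 1$ and $\beta_{\mathcal R_\kappa}(p)\le 1+2/\kappa$ via the geometric corridor \eqref{corridorQ}, using $R_\Omega P(\Omega)/|\Omega| = 1+2R/L$. Your explicit check that $L\ge \kappa R\ge 2R$ forces $R_\Omega=R$ is a small detail the paper leaves implicit.
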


\begin{proof}
Let $\Omega=\Omega_{R,L}\in\mathcal{R}_\kappa$. A direct computation gives
\[
|\Omega|=2RL, \qquad P(\Omega)=2(2R+L), \qquad R_\Omega=R,
\]
and therefore
\[
R_\Omega \frac{P(\Omega)}{|\Omega|}
= 1+\frac{2}{L/R}
\le 1+\frac{2}{\kappa}.
\]
Recalling from \eqref{corridorQ} that
$1 \le Q_p(\Omega) < R_\Omega P(\Omega)/|\Omega|$,
we immediately obtain
\[
\gamma_{\mathcal{R}_\kappa}(p)
=\frac{\inf_{\mathcal{R}_\kappa} Q_p}{\sup_{\mathcal{R}_\kappa} Q_p}
\ge \frac{1}{1+2/\kappa}
= \frac{\kappa}{\kappa+2}.
\]
\end{proof}

\begin{remark}
For $\kappa=2$ (which includes the square) one obtains
$\gamma_{\mathcal{R}_2}(p)\ge 1/2$. Moreover, as $\kappa\to\infty$, the geometric corridor
\[
1 \le Q_p(\Omega) < 1+\frac{2}{\kappa}
\]
shrinks to the singleton $\{1\}$. Consequently,
$Q_p(\Omega)\to 1$ and $\gamma_{\mathcal R_\kappa}(p)\nearrow 1$ as $\kappa\to\infty$.
This conclusion is independent of whether the limit $\kappa\to\infty$ is achieved
by letting $L\to\infty$ with $R$ fixed or by letting $R\to 0$ with $L$ fixed, since both regimes force the same collapse of the geometric corridor.
\end{remark}

\begin{remark}[Analogue in terms of $\overline Q_p$]
Using the bounds in \eqref{treisteleparalele} and \eqref{patrusteleparalele}, one finds that 
\[
\frac{1}{3} \le \overline Q_p(\Omega_{R,L}) \le 1 ,
\]
independently of the aspect ratio $L/R$, with constants depending only on the dimension ($D=2$).

The behavior of $\overline Q_p(\Omega_{R,L})$ as $\kappa=L/R\to\infty$ depends on the way this limit is achieved. If $\kappa\to\infty$ through $L\to\infty$ with $R$ fixed, then the geometry converges
locally to an infinite strip and the $p$-torsion problem becomes asymptotically
independent of the longitudinal variable. In this regime one has $\overline Q_p(\Omega_{R,L}) \to 1/2$ (see, for instance, the computations from the proof of Proposition~6.1 in \cite[pp. 17-18]{BBP}).

If instead $\kappa\to\infty$ through the thin-domain regime $R\to0$ with $L$ fixed,
the distance function satisfies $0\le \delta _{\Omega_{R,L}}\le R$, and therefore
\[
0\le \delta(\Omega_{R,L})=\frac{1}{|\Omega_{R,L}|}\int_{\Omega_{R,L}} \delta _{\Omega_{R,L}}\,dx
\le R \longrightarrow 0 .
\]
Geometrically, the domain collapses to a one-dimensional segment.
In this regime we do not claim a specific limit for $\overline Q_p$,
although the same universal bounds still apply.
\end{remark}

\subsection{Higher-dimensional orthotopes}\label{sectia5.2}
\label{subsec:orthotopes}

For $\kappa\ge2$, consider the family of axis-aligned orthotopes
\[
\mathcal O_\kappa:=\Big\{\Omega=\prod_{i=1}^{D}(0,\ell_i):\
\ell_1=2R,\ \ell_j\ge\kappa R \text{ for } j=2,\dots,D \Big\}.
\]
Each element of $\mathcal O_\kappa$ is a $D$-dimensional axis-aligned box with
inradius $R$ and aspect ratio at least~$\kappa$ between the shortest side and
the remaining ones.

\begin{proposition}
For every $p>1$ and $\kappa\ge2$, we have
\[
\gamma_{\mathcal O_\kappa}(p)\ \ge\ \frac{\kappa}{\kappa+2(D-1)}.
\]
\end{proposition}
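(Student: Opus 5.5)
The plan is to copy the rectangle argument: we use the geometric corridor \eqref{corridorQ}, namely $1\le Q_p(\Omega)<R_\Omega P(\Omega)/|\Omega|$, and bound the ratio $R_\Omega P(\Omega)/|\Omega|$ uniformly on $\mathcal O_\kappa$.

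We first check that every $\Omega=\prod_{i=1}^D(0,\ell_i)\in\mathcal O_\kappa$ has inradius exactly $R$. The inradius of an orthotope is half its shortest side. Here $\ell_1=2R$ and, since $\kappa\ge2$, $\ell_j\ge\kappa R\ge 2R$ for $j\ge2$, so the minimum side is $2R$ and $R_\Omega=R$.

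Next we compute the volume and perimeter:
\[
|\Omega|=\prod_{i=1}^D\ell_i,\qquad P(\Omega)=2\sum_{i=1}^D\prod_{k\neq i}\ell_k,\qquad\text{so}\qquad \frac{P(\Omega)}{|\Omega|}=2\sum_{i=1}^D\frac{1}{\ell_i}.
\]
Therefore
\[
R_\Omega\frac{P(\Omega)}{|\Omega|}=2R\Big(\frac{1}{2R}+\sum_{j=2}^D\frac{1}{\ell_j}\Big)=1+\sum_{j=2}^D\frac{2R}{\ell_j}\le 1+\frac{2(D-1)}{\kappa},
\]
using $\ell_j\ge\kappa R$ for each $j\ge 2$.

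By \eqref{corridorQ}, every $\Omega\in\mathcal O_\kappa$ satisfies $1\le Q_p(\Omega)<1+2(D-1)/\kappa$. Hence $\alpha_{\mathcal O_\kappa}(p)\ge1$ and $\beta_{\mathcal O_\kappa}(p)\le 1+2(D-1)/\kappa$, which gives
\[
\gamma_{\mathcal O_\kappa}(p)=\frac{\alpha_{\mathcal O_\kappa}(p)}{\beta_{\mathcal O_\kappa}(p)}\ge\frac{1}{1+2(D-1)/\kappa}=\frac{\kappa}{\kappa+2(D-1)}.
\]

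None of these steps is a real obstacle. The only points needing care are the inradius identification, which relies on $\kappa\ge2$, and the fact that the lower bound $1$ on $\alpha_{\mathcal O_\kappa}(p)$ comes directly from the Hersch--Protter inequality \eqref{HPineq} via \eqref{corridorQ}.
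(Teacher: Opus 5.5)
Your proof is correct and follows the paper's own route. You bound $R_\Omega P(\Omega)/|\Omega| = 1+\sum_{j\ge2} 2R/\ell_j \le 1+2(D-1)/\kappa$ on $\mathcal O_\kappa$ and insert this into the geometric corridor \eqref{corridorQ}, exactly as the paper does; your explicit check that $R_\Omega=R$ (via $\kappa\ge2$) is correct and is a detail the paper leaves implicit.
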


\begin{proof}
We have
\[
\frac{P(\Omega)}{|\Omega|}=2\sum_{i=1}^D\frac{1}{\ell_i}.
\]
With $\ell_1=2R$ and $\ell_j\ge\kappa R$ for $j\ge2$, it follows that
\[
R_\Omega\,\frac{P(\Omega)}{|\Omega|}
\ \le\ 2R_\Omega\!\left(\frac{1}{2R_\Omega}+\sum_{j=2}^{D}\frac{1}{\kappa R_\Omega}\right)
=1+\frac{2(D-1)}{\kappa}.
\]
Using \eqref{corridorQ} we deduce
\[
\gamma_{\mathcal O_\kappa}(p)
\ \ge\ \frac{1}{1+\tfrac{2(D-1)}{\kappa}}
=\frac{\kappa}{\kappa+2(D-1)}.
\]
\end{proof}

\begin{remark}
For $\kappa=2$ (which includes the hypercube) one obtains
$\gamma_{\mathcal O_\kappa}(p)\ge 1/D$, in agreement with the universal barrier
$[1/D,1)$.
Moreover, as $\kappa\to\infty$, the geometric corridor in \eqref{corridorQ}
shrinks to the singleton $\{1\}$. Consequently,
$Q_p(\Omega)\to 1$ and $\gamma_{\mathcal O_\kappa}(p)\nearrow 1$.
This conclusion is independent of whether the limit $\kappa\to\infty$ is achieved
by letting the sidelengths $\ell_j$ ($j\ge2$) tend to infinity with $R$ fixed,
or by letting $R\to 0$ with the other sidelengths fixed.
\end{remark}

\begin{remark}[Analogue in terms of $\overline Q_p$]
For any $\Omega\in\mathcal O_\kappa$, the bounds in \eqref{corridorQbar} yield
\[
\frac{1}{D+1} \le \overline Q_p(\Omega) \le \frac{D}{2},
\]
independently of the aspect ratio $\kappa$.
As a consequence,
$\overline\gamma_{\mathcal O_\kappa}(p)$ admits no sharper explicit expression than
the universal estimate $\overline\gamma_{D,p}\ge 2/(D(D+1))$.

The behavior of $\overline Q_p(\Omega)$ as $\kappa\to\infty$ depends on the mechanism
by which this limit is achieved.
If $\kappa\to\infty$ through the regime where the sidelengths
$\ell_j\to\infty$ for $j\ge2$ with $R$ fixed, the geometry converges locally to
$\mathbb R^{D-1}\times(0,2R)$ and the $p$-torsion problem becomes asymptotically
independent of the longitudinal variables. In this case,
$\overline Q_p(\Omega)\to 1/2$.

If instead $\kappa\to\infty$ through the thin-domain regime $R\to0$ with the other
sidelengths fixed, the inradius $R_\Omega$ tends to zero and the domain collapses
to a lower-dimensional set. Consequently, $\delta(\Omega)\to0$, and no specific
limit is claimed for $\overline Q_p$ in this regime, although the same universal bounds still apply.
\end{remark}

\subsection{Planar ellipses for $p=2$}
\label{subsec:ellipses}

For $\kappa\ge1$, we consider the family of ellipses
\[
\mathcal E_\kappa:=\big\{\,E=\{(x,y):x^2/a^2+y^2/b^2<1\},\ 1\le a/b\le\kappa\,\big\}.
\]
Note that for $E\in\mathcal E_\kappa$ with $a\ge b$, the inradius is given by $R_E=b$.

\begin{proposition}
For every $\kappa\ge1$, we have
\[
\gamma_{\mathcal E_\kappa}(2)
=\frac{1}{\sqrt2}\,\sqrt{1+\frac1{\kappa^2}}.
\]
\end{proposition}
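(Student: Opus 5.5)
The plan is to compute $Q_2$ exactly on every ellipse of $\mathcal E_\kappa$, because for $p=2$ the torsion function of an ellipse is an explicit quadratic. The infimum and supremum defining $\gamma_{\mathcal E_\kappa}(2)$ can then be read off directly. The corridor \eqref{corridorQ} is not needed here.

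\emph{Step 1 (torsion function and rigidity).} For $E=\{x^2/a^2+y^2/b^2<1\}$ with $a\ge b$, I take the ansatz
\[
u_2(x,y)=\frac{1-x^2/a^2-y^2/b^2}{2\,(a^{-2}+b^{-2})}.
\]
It vanishes on $\partial E$ and satisfies $\Delta u_2=-1$, so by uniqueness it is the $2$-torsion function. The change of variables $(x,y)=(as,bt)$ gives
\[
\int_E\bigl(1-x^2/a^2-y^2/b^2\bigr)\,dx\,dy=ab\int_{B_1^2}(1-|z|^2)\,dz=\frac{\pi ab}{2}.
\]
Hence
\[
T_2(E)=\frac{\pi a^3b^3}{4(a^2+b^2)}
\quad\text{and}\quad
T(2;E)=\frac{|E|}{T_2(E)}=\frac{4(a^2+b^2)}{a^2b^2}.
\]

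\emph{Step 2 (the functional $Q_2$).} Since $R_E=b$ and $\bigl(\tfrac{2p-1}{p-1}\bigr)^{p-1}=3$ for $p=2$, definition \eqref{defQ} gives
\[
Q_2(E)=\Bigl(\tfrac{1}{3}\,T(2;E)\,b^2\Bigr)^{1/2}=\frac{2}{\sqrt3}\sqrt{1+\frac{b^2}{a^2}}.
\]
This depends only on the aspect ratio $t=a/b\in[1,\kappa]$, as expected from scale invariance, and it is strictly decreasing in $t$.

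\emph{Step 3 (extrema and ratio).} The supremum over $\mathcal E_\kappa$ is attained at $t=1$ (the disc) and equals $\beta_{\mathcal E_\kappa}(2)=2\sqrt2/\sqrt3=\sqrt{8/3}$, which agrees with $\beta(2;2)$ from \eqref{betaball}. The infimum is attained at $t=\kappa$ and equals $\alpha_{\mathcal E_\kappa}(2)=\tfrac{2}{\sqrt3}\sqrt{1+\kappa^{-2}}$. Dividing the two gives $\gamma_{\mathcal E_\kappa}(2)=\tfrac{1}{\sqrt2}\sqrt{1+\kappa^{-2}}$, which is the claim.

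There is no real obstacle. The only points needing care are confirming that the explicit quadratic is the weak solution (it is smooth and satisfies the boundary condition), and the bookkeeping of the constant $3$ together with the identification $R_E=b$ for $a\ge b$.
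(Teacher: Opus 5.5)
Your proposal is correct and follows essentially the same route as the paper: you use the explicit quadratic torsion function to get $T_2(E)=\pi a^3b^3/(4(a^2+b^2))$ and $Q_2(E)=\tfrac{2}{\sqrt3}\sqrt{1+(b/a)^2}$, then read off the supremum $\sqrt{8/3}$ at the disc and the infimum at aspect ratio $\kappa$. Your version also writes out the integral $\int_E(1-x^2/a^2-y^2/b^2)\,dx\,dy=\pi ab/2$ and the monotonicity in $a/b$, which the paper leaves as a direct computation.
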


\begin{proof}
First we note that for $E\in\mathcal E_\kappa$, the torsion function is explicit
(see, e.g., \cite{KMcN93}):
\[
u(x,y)=\frac{1-\frac{x^2}{a^2}-\frac{y^2}{b^2}}
{2\big(\frac1{a^2}+\frac1{b^2}\big)}.
\]
A direct computation yields
\[
T_2(E)=\frac{\pi a^3 b^3}{4(a^2+b^2)},\qquad
|E|=\pi ab,
\]
hence
\[
Q_2(E)
=\frac{2}{\sqrt3}\,\sqrt{1+\frac1{(a/b)^2}}.
\]
Since  $1\leq a/b\leq\kappa$, we have
\[
\alpha_{\mathcal E_\kappa}(2)
=\frac{2}{\sqrt3}\sqrt{1+\frac1{\kappa^2}},
\qquad
\beta_{\mathcal E_\kappa}(2)=\sqrt{\tfrac83},
\]
and the conclusion follows.
\end{proof}

\begin{remark}
The function $\gamma_{\mathcal E_\kappa}(2)$ is strictly decreasing in $\kappa$
and varies from $1$ (the disk $\kappa=1$) down to $1/\sqrt 2$ as
$\kappa\to\infty$.
\end{remark}

\begin{remark}[Analogue in terms of $\overline Q_2$]
For a planar ellipse $E\in\mathcal E_\kappa$ one has $R_E=b$ and
\[
\frac{b}{3} \le \delta(E) \le \frac{b}{2}.
\]
As a consequence,
\[
\frac13 \le \overline Q_2(E) \le \frac23,
\qquad
E\in\mathcal E_\kappa,
\]
and therefore $\overline\gamma_{\mathcal E_\kappa}(2)=1/2$.
Even in this completely explicit case, $\overline Q_2$ remains confined to a universal
interval and does not distinguish between the extremal shapes within the family,
in contrast with the sharp evaluation provided by $Q_2$.

In the thin-domain limit $b\to0$ with $a$ fixed, the inradius $R_E=b$ tends to
zero. By the geometric bounds relating $\delta(E)$ and $R_E$, the average
distance to the boundary also converges to zero.
The ellipse degenerates to a line segment, and in this regime
$\overline Q_2$ no longer distinguishes between different shapes within the
family, beyond the universal bounds.
\end{remark}

\subsection{Triangles}
\label{subsec:triangles}

We finally consider the family of all open, bounded triangles in the plane,
denoted by $\mathcal T\subset\PP^2$.

\begin{proposition}\label{triangleprop}
For every $p>1$, we have
\[
\gamma_{\mathcal T}(p)\ \ge\ \frac12 .
\]
\end{proposition}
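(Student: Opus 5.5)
The bound will follow from the geometric corridor \eqref{corridorQ}, in the same way as for rectangles and orthotopes. We first show that every triangle $\Omega\in\mathcal T$ satisfies
\[
1\le Q_p(\Omega)<R_\Omega\frac{P(\Omega)}{|\Omega|}.
\]
Then we compute the middle quantity for triangles exactly. For any polygon circumscribed about its incircle, and in particular for every triangle, the classical area formula gives
\[
|\Omega|=\tfrac12 R_\Omega P(\Omega).
\]
This is seen by decomposing $\Omega$ into the three triangles with apex at the incenter and bases the sides. Hence $R_\Omega P(\Omega)/|\Omega|=2$ for every triangle, and consequently $1\le Q_p(\Omega)<2$ for all $\Omega\in\mathcal T$ and all $p>1$.

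From this two-sided bound we get $\alpha_{\mathcal T}(p)\ge 1$ and $\beta_{\mathcal T}(p)\le 2$. Therefore
\[
\gamma_{\mathcal T}(p)=\frac{\alpha_{\mathcal T}(p)}{\beta_{\mathcal T}(p)}\ge\frac12 .
\]
We do not need any degenerating sequence here, because only a lower bound on the ratio is claimed.

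There is no real obstacle in this plan. The one point to handle carefully is that the strict upper bound in \eqref{corridorQ} gives only $\beta_{\mathcal T}(p)\le 2$, not a strict inequality, after taking the supremum. This is harmless since the statement asks only for $\ge 1/2$. We also note that the same argument applies verbatim to any family of convex polygons or polytopes circumscribed about a ball. In dimension $D$ the identity becomes $|\Omega|=R_\Omega P(\Omega)/D$, and the argument yields the bound $1/D$.
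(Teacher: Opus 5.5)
Your proposal is correct and matches the paper's proof: you insert the triangle identity $|T|=R_T\,P(T)/2$ into the geometric corridor \eqref{corridorQ}, obtain $1\le Q_p(T)<2$, and take the ratio of the infimum to the supremum over $\mathcal T$. Your side remarks, that taking the supremum only gives $\beta_{\mathcal T}(p)\le 2$ and that the same argument gives the bound $1/D$ for polytopes circumscribed about a ball, are also correct.
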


\begin{proof}
For $T\in\mathcal T$, with inradius $R_T$, perimeter $P(T)$, and area $|T|$,
the well-known identity
\[
|T| = R_T\,\frac{P(T)}{2}
\]
implies
\[
R_T\,\frac{P(T)}{|T|}=2 .
\]
Inserted into the general corridor \eqref{corridorQ}, this yields
\[
1 \le Q_p(T) < 2 ,
\qquad \forall T\in\mathcal T ,\ \forall p>1 .
\]
Therefore,
\[
\gamma_{\mathcal T}(p)
= \frac{\inf_{\mathcal T} Q_p}{\sup_{\mathcal T} Q_p}
\ge \frac{1}{2}.
\]
\end{proof}

{\noindent
We stress that the upper bound $Q_p(T)<2$ used above comes from the geometric
corridor \eqref{corridorQ} and is not sharp on $\mathcal T$; hence the value
$1/2$ is only a lower estimate for $\gamma_{\mathcal T}(p)$ and need not be
optimal (see Remark~\ref{rem:triangles-sup} below).}

{%
\begin{remark}\label{rem:triangles-sup}
The estimate $Q_p(T)<2$ for triangles comes from the geometric corridor
\eqref{corridorQ} and is not sharp on $\mathcal T$. Indeed, by \eqref{betaball}
the global supremum of $Q_p$ over all planar convex sets is $\beta(p;2)<2$,
attained, up to translations and dilations, by disks; for $p=2$ one has
$\beta(2;2)=\sqrt{8/3}\approx 1.6329$. Consequently
$\sup_{T\in\mathcal T}Q_p(T)\le\beta(p;2)<2$, so the lower bound $1/2$ in
Proposition~\ref{triangleprop} need not be optimal.
\end{remark}}

\begin{remark}[Analogue in terms of $\overline Q_p$]
For every planar triangle $T$ one has the exact identity
\[
\delta(T)=\frac{R_T}{3}.
\]
Consequently,
\[
\frac13 \le \overline Q_p(T) \le \frac23 ,
\qquad T\in\mathcal T .
\]
Thus, even for triangles, where explicit geometric formulas are available,
$\overline Q_p$ remains confined to a fixed universal interval and does not
distinguish between extremal shapes within the family.
\end{remark}

\medskip
{Taken together, these examples clarify the geometric meaning of $\gamma_{D,p}$. The lower bound $Q_p(\Omega)\ge1$ is attained in dimension $D=1$; for $D\ge2$ it is the infimum of $Q_p$ over convex domains, approached in some families, for instance as $\kappa\to\infty$ for rectangles and orthotopes, through collapse to one-dimensional configurations. Within a fixed family, larger values of $Q_p$ tend to occur for more balanced geometries of smaller aspect ratio. These family-wise values, however, do not in general reach the global supremum $\beta(p;D)=\gamma_{D,p}^{-1}$, which over $\PP^D$ is attained by balls; for $D=2,p=2$, $\beta(2;2)=\sqrt{8/3}\approx1.6329$. Thus $\gamma_{D,p}$ is governed by the ball, while $\alpha(p;D)=1$ is approached only through degenerating families.}

In contrast, the scale-invariant quantity $\overline Q_p$ remains confined to
universal dimensional intervals in all these examples. In particular, when
degenerating families collapse to lower-dimensional sets, the inradius tends to
zero and, by the geometric inequalities linking $\delta(\Omega)$ and $R_\Omega$,
the average distance to the boundary vanishes as well. As a consequence,
$\overline Q_p$ does not capture the extremal geometric behavior responsible for
the sharpness of the comparison constants, even in settings where explicit
formulas are available.

\section{Limiting regimes: the cases $p\to1^+$ and $p\to\infty$}
\label{sec:limits}

The scale-invariant functionals $Q_p(\Omega)$ and $\overline Q_p(\Omega)$ provide
a natural bridge between the two extremal regimes of the $p$-torsion problem,
namely the Cheeger-type limit $p\to1^+$ and the distance-function-dominated
limit $p\to\infty$. In this section we recall the precise asymptotic behavior of
the normalized $p$-torsional rigidity $T(p;\Omega)$ and describe the
corresponding limits of $Q_p(\Omega)$ and $\overline Q_p(\Omega)$ within the
framework introduced in Sections~\ref{sectia1}--\ref{sectia4}.

For each $p>1$, let $u_p$ denote the solution of the $p$-torsion problem in
$\Omega$, and recall that
\[
T_p(\Omega):=\int_\Omega u_p\,dx,
\qquad
T(p;\Omega):=|\Omega|^{p-1}\,T_p(\Omega)^{\,1-p}.
\]
Recall also the definitions of the two scale-invariant functionals,
\[
Q_p(\Omega)^p
=
\frac{T(p;\Omega)\,R_\Omega^p}{\Big(\frac{2p-1}{p-1}\Big)^{p-1}},
\qquad
\overline Q_p(\Omega)^p
=
\frac{T(p;\Omega)\,\delta(\Omega)^p}{\Big(\frac{2p-1}{p-1}\Big)^{p-1}}.
\]

\subsection*{(i) The limit $p\to1^+$}

It is known (see, e.g. H. Bueno \& G. Ercole \cite[Theorem~2]{BE}) that
\[
\lim_{p\to1^+} T_p(\Omega)^{\,1-p}
=
h(\Omega),
\]
where
\[
h(\Omega):=\inf_{A\subset\Omega}\frac{P(A)}{|A|}
\]
denotes the Cheeger constant of\/ $\Omega$. Consequently,
\[
\lim_{p\to1^+} T(p;\Omega)=h(\Omega).
\]
Observe also that
\[
\lim_{p\to1^+}\Big(\tfrac{2p-1}{p-1}\Big)^{p-1}=1.
\]
Recalling the definitions above, we obtain
\[
\lim_{p\to1^+} Q_p(\Omega)
=
R_\Omega\,h(\Omega)
=: Q_1(\Omega),
\qquad
\lim_{p\to1^+} \overline Q_p(\Omega)
=\delta(\Omega)\,h(\Omega)
=: \overline Q_1(\Omega).
\]
Thus, in the limit $p\to1^+$, the functional $Q_p$ reduces to the
scale-invariant product $R_\Omega\,h(\Omega)$, while the functional
$\overline Q_p$ reduces to the product $\delta(\Omega)\,h(\Omega)$, where
$\delta(\Omega)$ denotes the average distance to the boundary. For convex
domains $\Omega\in\PP^D$, the functional $Q_1$ satisfies the corridor
\[
1
\le Q_1(\Omega)
\le D,
\qquad \forall\,\Omega\in\PP^D,
\]
obtained by letting $p\to1^+$ in the geometric corridor \eqref{corridorQ}. The lower endpoint is approached by slab-type boxes
\(\Omega_L=(-L,L)^{D-1}\times(-R,R)\), as \(L/R\to\infty\); in this regime
\(h(\Omega_L)R_{\Omega_L}\to1\), and hence \(Q_1(\Omega_L)\to1\), while the upper
endpoint is attained by balls, for which $h(B_R^D)=D/R$ and hence
$Q_1(B_R^D)=D$. For $\overline Q_1$, the same estimates yield the dimensional interval
\[
\frac{1}{D+1}\le \overline Q_1(\Omega)\le \frac{D}{2},
\qquad \forall\,\Omega\in\PP^D,
\]
obtained by combining the bounds
$\tfrac{1}{D+1}\le\delta(\Omega)/R_\Omega\le\tfrac12$ and
$1\le R_\Omega h(\Omega)\le D$; we do not claim sharpness of these endpoints.

\subsection*{(ii) The limit $p\to\infty$}

Recall from \eqref{asyT} that the normalized $p$-torsional rigidity satisfies
\[
\lim_{p\to\infty} T(p;\Omega)^{1/p} = \delta(\Omega)^{-1}.
\]
Observe also that
\[
\lim_{p\to\infty}\Big(\tfrac{2p-1}{p-1}\Big)^{(p-1)/p}=2.
\]
Recalling the definitions above, we obtain
\[
\lim_{p\to\infty} Q_p(\Omega)
=
\frac{R_\Omega}{2\delta(\Omega)}
=: Q_\infty(\Omega),
\qquad
\lim_{p\to\infty} \overline Q_p(\Omega)
=\frac12
=:\overline Q_\infty(\Omega).
\]
Thus, in the limit $p\to\infty$, the functional $Q_p$ reduces to the
scale-invariant ratio $R_\Omega/(2\delta(\Omega))$, while the functional
$\overline Q_p$ reduces to the constant value $1/2$, for every
$\Omega\in\PP^D$. By Proposition~6.1 in \cite{BBP}, the inradius $R_\Omega$ and
the average distance to the boundary $\delta(\Omega)$ are comparable through
purely dimensional constants, and the functional $Q_\infty$ satisfies the
corridor
\[
1
\le Q_\infty(\Omega)
\le \frac{D+1}{2},
\qquad \forall\,\Omega\in\PP^D.
\]
The lower endpoint is approached by slab-type boxes
\(\Omega_L=(-L,L)^{D-1}\times(-R,R)\), as \(L/R\to\infty\); indeed,
\(R_{\Omega_L}=R\), \(\delta(\Omega_L)/R_{\Omega_L}\to1/2\), and hence
\(Q_\infty(\Omega_L)\to1\), in
agreement with the asymptotic saturation observed in
Section~\ref{sec:model-families}. On the other hand, the upper endpoint is attained by balls,
for which $\delta(B_R^D)=R/(D+1)$ and hence $Q_\infty(B_R^D)=(D+1)/2$.

\begin{remark}
We emphasize that the present limiting discussion concerns both functionals
$Q_p(\Omega)$ and $\overline Q_p(\Omega)$, and that the two regimes are
symmetric in structure. In the regime $p\to1^+$, $Q_p(\Omega)$ converges to the
product $R_\Omega\,h(\Omega)$, thereby encoding simultaneously the inradius and
the Cheeger constant of the domain, while $\overline Q_p(\Omega)$ converges to
the product $\delta(\Omega)\,h(\Omega)$, encoding simultaneously the average
distance to the boundary and the Cheeger constant of the domain. In the opposite
regime $p\to\infty$, the functional $Q_p(\Omega)$ converges to the ratio
$R_\Omega/(2\delta(\Omega))$, linking the comparison principle to the
average-distance geometry discussed earlier, while the functional
$\overline Q_p(\Omega)$ converges to the constant value $\tfrac12$, for every
$\Omega\in\PP^D$.
\end{remark}

\section{Concluding remarks}\label{sectia7}

In this paper, we have established a unified comparison criterion for the
normalized $p$-torsional rigidity $T(p;\Omega)$ on convex domains, valid for
all $p\in(1,\infty)$.
More precisely, for any fixed dimension $D\ge 2$ and any pair of real numbers
$0<a<b$, we proved that there exists a constant
$\gamma_{D,p}$ given in relation (\ref{constantanoua}), depending only on $D$ and $p$, such that
\[
T(p;\Omega_b)\le T(p;\Omega_a),
\quad\text{for all }\Omega_a\in\PP^D(a),\ \Omega_b\in\PP^D(b),
\]
if and only if $\gamma_{D,p} b \ge a$.
This result provides the natural analogue, in the framework of
$p$-torsional rigidity, of classical comparison inequalities for the first
Dirichlet eigenvalue of the $p$-Laplacian (see \cite{MSD}).

{The introduction of the scale-invariant functional $Q_p(\Omega)$ allows for a precise geometric interpretation of the comparison principle established in this paper. The upper extremum $\beta(p;D)$ of $Q_p$ over $\PP^D$ is attained, up to translations and dilations, by balls, see \eqref{betaball}, and the comparison constant $\gamma_{D,p}=1/\beta(p;D)$ is therefore governed by the ball; the lower endpoint $\alpha(p;D)=1$ is not attained in dimension $D\ge2$ and is approached only through degenerating domains. The explicit evaluation of $Q_p(\Omega)$ on several model families, including rectangles, higher-dimensional orthotopes, planar ellipses, and triangles, illustrates this behaviour of $Q_p$ within these classes rather than the sharpness of the global upper endpoint: in several families, larger values of $Q_p$ are often associated with more balanced geometries, for instance squares among rectangles and hypercubes among orthotopes, whereas thin-domain regimes lead to values of $Q_p$ close to the lower endpoint $1$, reflecting collapse toward one-dimensional configurations.}

Throughout this work, the analysis has been carried out for the normalized
$p$-torsional rigidity $T(p;\Omega)$, rather than for the classical torsional
rigidity
\[
T_p(\Omega):=\int_\Omega u_p\,dx.
\]
This choice is dictated by the favorable scaling properties of $T(p;\Omega)$,
which are essential for the formulation and proof of the comparison principle
developed here.
Nevertheless, several explicit examples for convex domains suggest that
analogous comparison phenomena should also hold at the level of the unnormalized
functional $T_p(\Omega)$.
At present, however, the lack of an appropriate scale-invariant framework for
$T_p(\Omega)$ prevents a direct extension of the present approach, and the
identification of alternative normalization strategies or comparison mechanisms
leading to similar results remains an interesting direction for future research.  {Within this perspective, we point out that Theorem~\ref{maintheorem} yields a fixed-volume monotonicity statement for $T_p$, naturally related to but distinct from the classical Saint-Venant inequality.}  More precisely, the {\it Saint-Venant inequality} (see, e.g., \cite[Theorem 2.5]{DPG} for its general version, \cite[Chapter V]{polsze} and \cite[relation (4)]{vdBB} for its classical formulation obtained in the case where $p=2$, and the recent historical discussion in \cite{Berger}) states that for any open bounded set $\Omega\subset\mathbb{R}^D$ and any $p>1$, we have  
\begin{equation}\label{dSVineq}
	T_p(\Omega)\leq T_p(\Omega^\star)\,,
\end{equation}
where $\Omega^\star$ denotes a ball with $|\Omega^\star|=|\Omega|$. Next, for any real number $c>0$ let us define the set
$$\PP^D_c:={\{\Omega\in\PP^D:\;|\Omega|=c\}}\,.$$
Since the balls belonging to $\PP^D_c$ have the largest inradii (equal to $(c/v_D)^{1/D}$, where $v_D:=|B_1^D|$) among all the sets belonging to $\PP^D_c$, {and since they maximize $T_p$ within $\PP^D_c$ by the Saint-Venant inequality (\ref{dSVineq}), it is natural to ask whether $T_p$ is monotone with respect to the prescribed inradius inside the fixed-volume class, namely: \emph{given $c>0$, $p>1$ and $0<a<b$ four real numbers, is it true that we have}}
$$T_p(\Omega_a)<T_p(\Omega_b),\qquad \forall\; \Omega_a\in\PP^D_c\cap\PP^D(a),\;\Omega_b\in\PP^D_c\cap\PP^D(b)?$$
By Theorem \ref{maintheorem} and relation (\ref{defTnorm}) we get the following answer to the above question:
\begin{proposition}\label{dSVprop}
	Let $D\geq 2$ be a fixed integer. Let $p>1$, $c>0$ and $0<a<b$  be four given real numbers. {Assume that $b < (c/v_D)^{1/D}$, and let $\gamma_{D,p}$ be the constant given in relation (\ref{constantanoua}) from Theorem \ref{maintheorem}. If $\gamma_{D,p}b\geq a$, then}
\begin{equation}\label{inegdSV}
T_p(\Omega_a)\leq T_p(\Omega_b),\qquad \forall\; \Omega_a\in\PP^D_c\cap\PP^D(a),\;\Omega_b\in\PP^D_c\cap\PP^D(b)\,,
\end{equation}
{with strict inequality in (\ref{inegdSV}) whenever $\gamma_{D,p}b>a$.}
\end{proposition}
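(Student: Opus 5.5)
The plan is to derive the statement directly from Theorem \ref{maintheorem}, using the definition \eqref{defTnorm} of the normalized rigidity to convert the comparison for $T(p;\cdot)$ into the opposite comparison for $T_p$ when the volume is fixed.

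Fix $c>0$, $p>1$ and $0<a<b<(c/v_D)^{1/D}$ with $\gamma_{D,p}b\ge a$. Take arbitrary $\Omega_a\in\PP^D_c\cap\PP^D(a)$ and $\Omega_b\in\PP^D_c\cap\PP^D(b)$.

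The hypothesis $b<(c/v_D)^{1/D}$ plays only a non-vacuity role. Any convex set of volume $c$ has inradius at most $(c/v_D)^{1/D}$, so inradii $b$ below this value can occur in $\PP^D_c$, for instance via suitable boxes. If $\PP^D_c\cap\PP^D(a)$ or $\PP^D_c\cap\PP^D(b)$ happens to be empty, the statement holds trivially. No further use of this hypothesis is needed.

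\textbf{Step 1.} Since $\Omega_a\in\PP^D(a)$, $\Omega_b\in\PP^D(b)$ and $\gamma_{D,p}b\ge a$, Theorem \ref{maintheorem} gives
\[
T(p;\Omega_b)\le T(p;\Omega_a),
\]
with strict inequality when $\gamma_{D,p}b>a$.

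\textbf{Step 2.} By \eqref{defTnorm} and $|\Omega_a|=|\Omega_b|=c$,
\[
T(p;\Omega)=c^{p-1}\,T_p(\Omega)^{1-p}\qquad\text{for }\Omega\in\{\Omega_a,\Omega_b\}.
\]
Dividing the inequality of Step 1 by the common factor $c^{p-1}>0$ yields
\[
T_p(\Omega_b)^{1-p}\le T_p(\Omega_a)^{1-p}.
\]

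\textbf{Step 3.} Since $1-p<0$, the map $t\mapsto t^{1-p}$ is strictly decreasing on $(0,\infty)$. Both rigidities are positive, because $u_p>0$ in $\Omega$. Hence the inequality of Step 2 is equivalent to $T_p(\Omega_a)\le T_p(\Omega_b)$, which is \eqref{inegdSV}. Strict monotonicity of $t\mapsto t^{1-p}$ also transfers strictness: when $\gamma_{D,p}b>a$, the strict inequality in Step 1 becomes $T_p(\Omega_a)<T_p(\Omega_b)$.

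There is no real obstacle here. The only points requiring care are the direction reversal coming from the negative exponent $1-p$, and noting that the volume constraint is exactly what cancels the factor $|\Omega|^{p-1}$ in \eqref{defTnorm}.
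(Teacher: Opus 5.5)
Your proposal is correct and takes the same route as the paper, which obtains Proposition~\ref{dSVprop} directly from Theorem~\ref{maintheorem} and the definition \eqref{defTnorm}. The common volume $c$ cancels the factor $|\Omega|^{p-1}$, and the negative exponent $1-p$ reverses the inequality, strictness included; your remark that the hypothesis $b<(c/v_D)^{1/D}$ only serves to keep the classes non-degenerate is also accurate.
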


{
\begin{remark}
We state Proposition~\ref{dSVprop} as a sufficient condition only. Indeed,
Theorem~\ref{maintheorem} gives a necessary and sufficient condition on the full
classes $\PP^D(a)$ and $\PP^D(b)$, whereas Proposition~\ref{dSVprop} concerns the
restricted fixed-volume classes $\PP^D_c\cap\PP^D(a)$ and
$\PP^D_c\cap\PP^D(b)$. The extremal sequences used in the converse part of
Theorem~\ref{maintheorem} are rescaled to have prescribed inradii, and such
rescalings do not generally preserve the constraint $|\Omega|=c$. Therefore, an
intrinsic necessary and sufficient condition in the fixed-volume setting would
require constants defined directly on the restricted classes
$\PP^D_c\cap\PP^D(a)$ and $\PP^D_c\cap\PP^D(b)$, rather than the universal
constant $\gamma_{D,p}$. More explicitly, one could introduce the restricted
constants
\[
\alpha_{c,a}(p;D):=\inf_{\Omega\in\PP^D_c\cap\PP^D(a)}Q_p(\Omega),
\qquad
\beta_{c,b}(p;D):=\sup_{\Omega\in\PP^D_c\cap\PP^D(b)}Q_p(\Omega).
\]
These need not coincide with the global constants $\alpha(p;D)$ and $\beta(p;D)$
entering $\gamma_{D,p}=\alpha(p;D)/\beta(p;D)$, since the fixed-volume constraint
$|\Omega|=c$ restricts the admissible domains. Consequently, the condition
$\gamma_{D,p}b\ge a$ is sufficient for \eqref{inegdSV}, but it should not be
interpreted as necessary for the fixed-volume classes
$\PP^D_c\cap\PP^D(a)$ and $\PP^D_c\cap\PP^D(b)$.
\end{remark}}

Finally, we analyzed the limiting regimes $p\to1^+$ and $p\to\infty$.
In the former case, the comparison problem naturally connects to the Cheeger
constant and the $1$-Laplacian, while in the latter it is governed by the ratio
between the inradius and the average distance to the boundary, in the spirit of
$\infty$-Laplacian-type problems.
Possible directions for further research include the application of this scale-invariant framework to other variational functionals with similar homogeneity properties. While extensions to non-convex geometries present significant challenges due to the loss of sharp inradius-based estimates, the methodology developed here appears robust for studying anisotropic operators or for establishing quantitative stability estimates for the comparison inequalities.

\bigskip

{\bf Acknowledgments.} The authors would like to thank the anonymous referee for the careful reading of the initial version of the paper and for the insightful comments and suggestions which have led to substantial improvements.

\end{document}